\numberwithin{equation}{section}
\newtheorem{thm}{Theorem}[section]
\newtheorem{prop}[thm]{Proposition}
\newtheorem{cor}[thm]{Corollary}
\newtheorem{lemma}{Lemma}[section]
\newtheorem{problem}{Problem}
\newtheorem{conjecture}{Conjecture}
	\newtheorem{theorem}{Theorem}
\theoremstyle{definition}
\newtheorem{example}{Example}[section]
\newtheorem{rem}{Remark}[section]
\title[On the periodicity of entire functions]{ New findings on the periodicity of entire functions and their differential polynomials}
\author{M.~A.~Zemirni, I.~Laine and Z.~Latreuch }
\address[M.~A.~Zemirni]{}
\email{\href{mailto:zmrn.amine@gmail.com}{zmrn.amine@gmail.com}}
\address[I.~Laine]{Department of Physics and Mathematics, University of Eastern Finland,
	P.O.Box~111, 80101 Joensuu, Finland.}
\email{\href{ilpo.laine@uef.fi}{ilpo.laine@uef.fi}}
\address[Z.~Latreuch]{National Higher School of Mathematics, Sidi Abdellah, Algeria.}
\email{\href{z.latreuch@nhsm.edu.dz}{z.latreuch@nhsm.edu.dz}}
\subjclass[2020]{Primary 30D20; Secondary 30D35, 39A45}
\keywords{Differential polynomials, entire functions, periodic functions, Yang's conjecture}
\begin{document}
	
\begin{abstract}In this paper, we seek to explore under what conditions the periodicity of an entire function $ f(z) $ follows from the periodicity of a differential polynomial in $ f(z) $. We improve and generalize some earlier results and we give  other new findings. 
\end{abstract}
	
\maketitle

%
%

\section{Introduction}
Recently, there has been renewed interest in the periodicity of entire functions and differential polynomials. This can be attributed to the development on the application of Nevanlinna theory into the theory of complex differential and difference equations. Here we assume that the reader is familiar with the fundamental results and standard notations of Nevanlinna theory \cite{L,LLY1,YY}, including the notions of order, hyper-order and convergence exponent of zeros. For instance,  a meromorphic function $\varphi(z)$ is said to be a small function of a meromorphic function $f(z)$ if the Nevanlinna characteristic $T(r,\varphi)$ of $ \varphi(z) $ satisfies $T(r,\varphi)=S(r,f)$, where the notation $S(r,f)$ stands for any quantity satisfying $S(r,f)=o(T(r,f))$ as $r\to \infty$ possibly outside an exceptional set of finite linear, resp. logarithmic, measure. 
In addition, a periodic function $f(z)$ with fundamental period $c$ or $mc$, for some integer $m\in \mathbb{N}$,  is said to be $c$-periodic.

Our starting point is the following conjecture.
\begin{conjecture}[Generalized Yang's conjecture]\label{Con}
	Let $f(z)$ be a transcendental entire function and $n,k$ be positive integers. If $f(z)^n f^{(k)}(z)$ is a periodic function, then $f(z)$ is also a periodic function.
\end{conjecture}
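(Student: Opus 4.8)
The plan is to convert the hypothesis into a functional equation relating $f$ to its translate. Writing $F(z)=f(z)^n f^{(k)}(z)$ and letting $c$ be a period of $F$, set $g(z)=f(z+c)$, which is again a transcendental entire function and satisfies $g^{(k)}(z)=f^{(k)}(z+c)$. The periodicity $F(z+c)=F(z)$ then reads
\begin{equation}\label{plan-eq}
f(z)^n f^{(k)}(z)=g(z)^n g^{(k)}(z).
\end{equation}
The target is to prove that $g=tf$ for a constant $t$: indeed, if $g=tf$ then $g^{(k)}=tf^{(k)}$, so the right side of \eqref{plan-eq} equals $t^{n+1}f^n f^{(k)}$, forcing $t^{n+1}=1$ (as $f^n f^{(k)}\not\equiv 0$); iterating the relation $f(z+c)=tf(z)$ gives $f(z+(n+1)c)=t^{n+1}f(z)=f(z)$, so $f$ is periodic with period $(n+1)c$.

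To reach $g=tf$ I would first exploit the fact that, since the two entire functions in \eqref{plan-eq} are equal, their zero divisors coincide exactly; the task is to translate this into a relation between the zeros of $f$ and those of $g$. If $z_0$ is a zero of $f$ of order $p\ge k$, then the left side of \eqref{plan-eq} vanishes there to order $(n+1)p-k$, and a symmetric count holds on the right for $g$; matching these divisors should show that $f$ and $g$ have the same zeros with the same multiplicities. I would then bring in Nevanlinna theory to control growth: under a regularity hypothesis such as finite order (or hyper-order less than $1$) the shift satisfies $T(r,g)=T(r,f)+S(r,f)$, while the lemma on the logarithmic derivative gives $m(r,f^{(k)}/f)=S(r,f)$ and $m(r,g^{(k)}/g)=S(r,f)$. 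Rewriting \eqref{plan-eq} as $f^{\,n+1}(f^{(k)}/f)=g^{\,n+1}(g^{(k)}/g)$ and setting $h=g/f$, I expect to force $T(r,h)=S(r,f)$, and then, combining this with the matched zero data by a Wronskian or Borel-type argument, to conclude that $h$ is constant.

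The hard part will be the zero analysis when $f$ has infinitely many zeros, and especially when $f$ has zeros of order smaller than $k$: at such a point the multiplicity of $f^{(k)}$ is not determined by the order of $f$ alone, so the divisors of the two sides of \eqref{plan-eq} need not correspond in an elementary way, and the clean count above breaks down. A second genuine obstacle is excluding \emph{twisted} solutions, that is, configurations in which $g^n g^{(k)}=f^n f^{(k)}$ holds without $g$ being a constant multiple of $f$; ruling these out appears to require either strong restrictions on the zero distribution of $f$ or additional structural hypotheses. Finally, for $f$ of infinite order the shift-invariance $T(r,g)=T(r,f)+S(r,f)$ may fail, so the growth step collapses. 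For these reasons I anticipate that the argument above will establish the conclusion only under supplementary assumptions (finitely many zeros, controlled order, or restricted ranges of $n$ and $k$), consistent with the statement being posed here as a conjecture rather than a theorem.
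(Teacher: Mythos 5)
This statement is an open conjecture; the paper does not prove it, and neither do you. Your write-up is a plan rather than a proof: the crucial step, deducing $g=tf$ from $f^n f^{(k)}=g^n g^{(k)}$, is never carried out, and the obstacles you candidly list --- zeros of $f$ of order smaller than $k$ (where the divisor of $f^{(k)}$ is not determined by that of $f$), ``twisted'' solutions of the functional equation that are not constant multiples of $f$, and the failure of the shift-invariance $T(r,f(z+c))=T(r,f)+S(r,f)$ for functions of infinite order or hyper-order at least $1$ --- are precisely the reasons the conjecture is still open. So there is a genuine gap: the entire core of the argument is missing, and no amount of polishing the outline closes it without new ideas.

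That said, your strategy is essentially the one underlying the known partial results, including those in this paper. When $f$ has a Borel or Picard exceptional value $d$, one writes $f-d=\pi e^{h}$ with $\rho(\pi)<\rho(f)$, and the functional equation between $f$ and its translate collapses to a relation between $\pi$, $h$ and their shifts that \emph{can} be resolved (this is how Theorems~\ref{th3} and \ref{th4} proceed, via Proposition~\ref{period-function-1} and Theorem~\ref{th2}); the conclusion there is exactly the $f(z+c)=tf(z)$ with $t^{\gamma_M}=1$ that you are aiming for, yielding period $c$ or $\gamma_M c$. In other words, the supplementary assumptions you anticipate needing (few zeros, controlled hyper-order) are the ones the paper actually imposes. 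If you want to turn your sketch into a theorem, restrict to $\lambda(f-d)<\rho(f)$ and follow the factorization route; for the conjecture in full generality, the zero-divisor matching you propose is where the problem genuinely lives and where something new is required.
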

This conjecture originally appeared in \cite{LLY,WH} for $ n = 1 $, known
as Yang's Conjecture. The present form of Conjecture~\ref{Con} has been stated in \cite{LWY}. It has been verified in the case $k=1$ in \cite{WH,LWY}. Furthermore, Conjecture~\ref{Con} remains true for entire functions having a Picard exceptional value with some restrictions on the growth \cite{LWY,LZ}. 

The origin of this conjecture goes back to $ 1939 $, when Titchmarsh \cite[p.~267]{T} showed that
the differential equation 
\begin{equation*}
f(z) f''(z) = -\sin^2(z)
\end{equation*}
has no real entire solutions of finite order other than $ f(z)=\pm \sin(z) $. After that, Li, L\"u and Yang  \cite[Theorem~1]{LLY} removed the assumption of $ f(z) $ being real entire function in Titchmarsh's result.  This, in fact, leads to the above conjecture after seeing that the differential polynomial $ -\sin^2(z) $ and the solutions $ f(z)=\pm\sin(z) $ are both periodic functions.  This property of the periodicity  appears in certain general non-linear differential equations. For example, the periodic function $ f(z)=e^{z / 4}+e^{-z / 4} $ satisfies the differential equation
\begin{equation*}
f(z)^{4}-64 f(z) f^{\prime \prime}(z)+2=e^{z}+e^{-z}.
\end{equation*}
Also, we see that the periodicity phenomenon occurs for meromorphic functions as well. For example, the periodic function $ f(z)=e^{2 z} /\left(e^{z}-1\right) $ satisfies the differential equation
\begin{equation*}
f(z)^{2}+f^{\prime}(z)-3 f(z)=e^{2 z}.
\end{equation*}
These observations lead to the question of whether the above conjecture is still valid if $f(z)^nf^{(k)}(z)$ is replaced by a general differential polynomial $P(z,f)$ with small coefficients. As shown by the following example, the answer to this question is not always affirmative.
\begin{example}\label{exam1}
	(1)  The function $f(z)=\exp(e^{2\pi iz}-z)$ is not periodic whereas the polynomial $ P(z,f) := e^{ 2z} f(z)^2 + e^{z} f(z)$ is periodic.\\[1ex]
	(2)   The function $f(z)=ze^z+d$, where $ d $ is a constant,  is not periodic whereas the differential polynomial $
	P(z,f):=(f'(z))^2-f(z)f''(z)+df(z)=\left(e^{z}-d\right)^2
	$ is periodic.
\end{example}

Therefore, the natural way to deal with the aforementioned question is to consider the following problem, instead.
\begin{problem}\label{pb}
	{Under what conditions the periodicity of a differential polynomial $ P(z,f) $ implies that of $ f(z) $?}
\end{problem}

This problem has been considered, for example in \cite{LWY,LY,LZ,LZ2,WLL}, for $ P(z,f) $ having the following specific forms:
\begin{equation*}
f^{(k)}(z) Q(f(z)) , \quad Q(f(z))^{(k)}, \quad f(z)^n + L(f(z)),
\end{equation*} 
where $ Q(z) $ is a polynomial, $ L(f) $ is a linear differential polynomial. 
In contrast to the above, our approach in treating Problem~\ref{pb} allows us to consider a more general form of $ P(z,f) $. Consequently, we give generalizations to the earlier results as well as new findings.


This paper is organized as follows. In Section~\ref{Sec2}, we give our main results discussing the Problem~\ref{pb} in cases when $ P(z,f) $ is a differential monomial and a differential polynomial with at least two non-zero terms and with constant coefficients. Some further results are given in Section~\ref{Sec3}, where we discuss the periodicity of some meromorphic functions and the periodicity of polynomials in an entire function with non-constant coefficients. The results of Section~\ref{Sec3} will be used to prove the results of Section~\ref{Sec2}. Results in Section~\ref{Sec3} can also be seen of independent interest. Finally, the proofs of the results of Section~\ref{Sec2} are given in Section~\ref{Sec 4}.

%
%

\section{Periodicity of differential polynomials}\label{Sec2}
This section is devoted to stating our main results regarding Problem~\ref{pb}.  In fact, we mainly focus on the case when $ f(z) $ is a transcendental entire function.  The case of meromorphic functions is still open for further research.

Here, we give some conditions on $ f(z) $ so that the implication in Problem~\ref{pb} holds. Besides, when $ P(z,f) $ reduces to a differential monomial, we find that no condition concerning the form of $ P(z,f) $ is needed. However, for differential polynomials with at least two terms, some conditions on $ P(z,f) $ are required. 


\subsection{Differential monomials}
We recall the following result.

\begin{theorem}[{\cite[Theorem~3.1]{LZ}}]\label{LZ1}
	Let $f(z)$ be a transcendental entire function and $n, k$ be positive integers. Suppose that  $f(z)^{n} f^{(k)}(z)$ is a periodic function with period $ c $, and one of the following holds:
	\begin{itemize}
		\item[(i)] $ f(z) $ has the value $ 0 $ as a Picard exceptional value, and $ \rho_2(f)<\infty $.
		\item[(ii)] $ f(z) $ has a nonzero Picard exceptional value.
	\end{itemize}
	Then $ f(z) $ is periodic of period $ c $ or $ (n+1) c $.
\end{theorem}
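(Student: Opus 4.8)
\section*{Proof proposal}

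The plan is to first use the Picard value to reduce $f$ to an exponential form, and then to transform the periodicity of $F:=f^{n}f^{(k)}$ into a single multiplicative functional equation that can be analysed. Consider case (i). Since $0$ is a Picard exceptional value, $f$ is zero-free, so $f=e^{g}$ for an entire $g$, and $\rho_2(f)<\infty$ forces $\rho(g)<\infty$. Put $Q:=f^{(k)}/f$, which is \emph{entire} (as $f$ has no zeros) and satisfies $T(r,Q)=m(r,Q)=S(r,f)$ by the lemma on the logarithmic derivative. Writing $F=f^{n}f^{(k)}=Qf^{\,n+1}$, the relation $F(z+c)=F(z)$ becomes
\[
\Bigl(\tfrac{f(z+c)}{f(z)}\Bigr)^{n+1}=\frac{Q(z)}{Q(z+c)}.
\]
Setting $\Psi(z):=f(z+c)/f(z)=e^{h(z)}$ with $h(z):=g(z+c)-g(z)$ entire, this reads $\Psi^{n+1}Q(z+c)=Q(z)$.

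The first substantive step is to show that $h$ is a polynomial. Taking the logarithmic derivative of $\Psi^{n+1}=Q(z)/Q(z+c)$ gives $(n+1)h'(z)=\frac{Q'(z)}{Q(z)}-\frac{Q'(z+c)}{Q(z+c)}$. Because $\rho(Q)\le\rho(g)<\infty$, both the ordinary and the shifted logarithmic derivative terms have proximity $O(\log r)$, so $m(r,h')=O(\log r)$; since $h'$ is entire it must be a polynomial, and hence so is $h$. This is precisely the point where the hypothesis $\rho_2(f)<\infty$ enters.

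The crux is to upgrade ``$h$ is a polynomial'' to ``$h$ is constant''. Differentiating $f(z+c)=e^{h}f$ exactly $k$ times and comparing with $f^{(k)}(z+c)=\Psi^{-n}f^{(k)}=e^{-nh}f^{(k)}$ (from the periodicity of $F$) yields the identity
\[
\bigl(e^{-(n+1)h}-1\bigr)f^{(k)}=\sum_{j=1}^{k}\binom{k}{j}P_{j}\,f^{(k-j)},
\]
where $P_{j}=P_{j}(h',\dots,h^{(j)})$ are the Bell polynomials determined by $(e^{h})^{(j)}=P_{j}e^{h}$. For $k=1$ this is $g'\bigl(e^{-(n+1)h}-1\bigr)=h'$, and since $g'=f'/f$ is entire while the polynomial $h'$ has only finitely many zeros, the infinitely many zeros of $e^{-(n+1)h}-1$ (which exist as soon as $h$ is nonconstant) cannot all be cancelled, forcing $h$ to be constant. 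For general $k$ one writes $Q=f^{(k)}/f$ as a fixed differential polynomial in $g',\dots,g^{(k)}$ and uses the decomposition $g=p+\pi$, where $p$ is a polynomial with $p(z+c)-p(z)=h$ and $\pi$ is a $c$-periodic entire function of finite order. A growth-order comparison of the two sides of the displayed identity disposes of the cases in which the polynomial part dominates (in particular $\deg h>\rho(g)$, and more generally $\pi$ of order $<\deg h$): there the genuinely non-periodic factor $e^{(n+1)h}$, of order $\deg h\ge 1$, cannot be balanced by the polynomial/lower-order right-hand side. \emph{I expect the main obstacle to be the remaining case, in which $\pi$ is transcendental of order $\ge\deg h$}; here the growth estimate is inconclusive and one must instead contrast the aperiodicity of $e^{(n+1)h}$ with the quasi-periodic structure of $B_{k}(g',\dots,g^{(k)})$ (for instance through a Phragm\'en--Lindel\"of/indicator argument along the direction of $c$) to conclude $\deg h=0$.

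Once $h\equiv\kappa$ is constant, every $P_{j}$ with $j\ge1$ vanishes, so the displayed identity collapses to $\bigl(e^{-(n+1)\kappa}-1\bigr)f^{(k)}=0$; as $f$ is transcendental, $e^{(n+1)\kappa}=1$. Thus $\Psi=e^{\kappa}$ is a constant with $\Psi^{n+1}=1$: if $\Psi=1$ then $f(z+c)=f(z)$ and $f$ is $c$-periodic, while if $\Psi\ne1$ then $f(z+(n+1)c)=\Psi^{\,n+1}f(z)=f(z)$ and $f$ is $(n+1)c$-periodic, giving the stated dichotomy. Case (ii) is handled in parallel by writing $f=a+e^{g}$ with $a\ne0$ the omitted value; the periodicity of $F$ then gives $(a+e^{g(z+c)})^{n}Q(z+c)e^{g(z+c)}=(a+e^{g})^{n}Qe^{g}$, and the nonzero omitted value rigidifies the comparison (with $f\sim e^{g}$ where $|e^{g}|$ is large and $f\sim a$ where it is small), allowing the analogous argument to force the shift-multiplier to be constant and to reach the same conclusion without a hyper-order restriction.
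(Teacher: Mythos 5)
First, note that this paper does not actually prove Theorem~\ref{LZ1}: it is quoted from \cite{LZ}. The machinery that would prove it here is contained in the proofs of the strictly more general Theorems~\ref{th3} and~\ref{th4} in Section~\ref{Sec 4}, and measured against that, your proposal has two genuine gaps. In case (i), your reduction ($f=e^{g}$, the functional equation $e^{(n+1)h}=Q(z)/Q(z+c)$ with $Q=f^{(k)}/f$, and the conclusion that $h$ is a polynomial) is sound and matches the paper's setup, and your zero-counting argument for $k=1$ is correct. But for general $k$ you prove nothing: you yourself flag that the growth comparison is inconclusive exactly when $\rho(g)\ge\deg h\ge 1$, and this is not a fringe case (already $g(z)=z^{2}$, $h$ linear, lands there); the appeal to an unspecified Phragm\'en--Lindel\"of/indicator argument is a placeholder, not a proof. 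The paper closes precisely this case differently: writing $Q$ as a differential polynomial in $g'$ with leading term $(g')^{k}$, the equation $Q=e^{(n+1)h}Q_{c}$ becomes $\bigl(1-e^{(n+1)h}\bigr)(g')^{k}=R$, where $R$ is a differential polynomial in $g'$ of degree at most $k-1$ with polynomial coefficients; a Valiron--Mohon'ko-type degree count then gives $T(r,g')=O(r^{\deg h})+S(r,g')$, which contradicts the lower bound $\rho(g)\ge \deg h+1$ obtained by applying the Chiang--Feng difference-quotient estimate \cite{CF} to $e^{(n+1)h}=Q/Q_{c}$. Hence $1-e^{(n+1)h}\equiv 0$, which is the desired conclusion for every $k$ at once. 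Some version of this leading-coefficient argument is the missing idea in your write-up.

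The second gap is case (ii), which you dispose of in one sentence by saying ``the analogous argument'' applies. It cannot: your case (i) argument needs $\rho_{2}(f)<\infty$ to make $h$ a polynomial, whereas case (ii) carries no growth restriction, so whatever works there must be structurally different. The mechanism used in the proof of Theorem~\ref{th4} is to expand
\[
f(z)^{n}f^{(k)}(z)=Q(z)\sum_{i=0}^{n}\binom{n}{i}d^{\,i}e^{(n+1-i)g(z)},\qquad f=d+e^{g},
\]
which, since $d\neq 0$ and $n\ge 1$, is a polynomial in $e^{g}$ with at least two non-zero terms and coefficients small relative to $e^{g}$ (because $T(r,g)=S(r,e^{g})$ with no order hypothesis). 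Theorem~\ref{th2} then forces each term to be $c$-periodic, and the quotient of the two leading terms shows $e^{g}$, hence $f$, is $c$-periodic. This Rényi-type argument is what makes case (ii) growth-free, and nothing in your sketch substitutes for it. In short: the skeleton and the $k=1$ computation are fine, but the two steps that carry the actual content of the theorem are absent.
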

As for the case when $f(z)$ has, in particular, a finite Borel exceptional value, we mention the following result.
\begin{theorem}[{\cite[Theorem~1.1]{LZ2}}]\label{LZ2}
	Let $f(z)$ be a transcendental entire function with $\rho_2(f)<~1$ and $n, k$ be positive integers.
	If there is a constant $ d$ such that $ \lambda(f-d)< \rho(f) \le~\infty $ and $f(z)^{n} f^{(k)}(z)$ is a periodic function, then $f(z)$ is a periodic function as well.
\end{theorem}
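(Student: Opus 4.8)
The plan is to turn the Borel exceptional value into a Hadamard-type factorization, and then to squeeze enough exponential identities out of the periodicity of $f^nf^{(k)}$, via Borel's theorem on linear combinations of exponentials, to force $f$ itself to be periodic. First I would factorize: since $\lambda(f-d)<\rho(f)$, the canonical product $v$ formed from the $d$-points of $f$ has order $\lambda(f-d)<\rho(f)$, so $f-d=v\,e^{u}$ with $u$ entire and $T(r,v)=S(r,f)$. Differentiating $k\ge 1$ times (the constant $d$ drops out) gives $f^{(k)}=w\,e^{u}$, where $w$ is a differential polynomial in $v,u',\dots,u^{(k)}$ with $T(r,w)=S(r,f)$; here the derivatives of $u$ are small because $\rho(u)=\rho_2(f)<1$ when $\rho(f)=\infty$, while $u$ is a polynomial when $\rho(f)<\infty$. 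Expanding, one gets
\[
f^{n}f^{(k)}=\sum_{j=0}^{n}\binom{n}{j}d^{\,n-j}\,v^{j}w\,e^{(j+1)u}.
\]

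The hard part, and the step I expect to be the main obstacle, is ruling out $\rho(f)=\infty$. Writing $F(z+c)=F(z)$ as $\sum_j A_j e^{(j+1)u(z+c)}=\sum_j B_j e^{(j+1)u(z)}$ with small coefficients $A_j,B_j$ built only from $v,w$ and their shifts, I would argue as follows. If $\rho(f)=\infty$, then $u$ is transcendental of order $\rho_2(f)<1$, and therefore the difference $u(z+c)-u(z)$ is transcendental as well: a polynomial difference would force a nonconstant $c$-periodic summand in $u$, which necessarily has order $\ge 1$. Consequently every pairwise difference of the exponents $\{(j+1)u(z),(j+1)u(z+c)\}$ is transcendental; the only delicate coincidence, $(j+1)u(z+c)=(j'+1)u(z)+O(1)$ with $j\neq j'$, would impose a functional equation $u(z+c)=\gamma\,u(z)+(\mathrm{polynomial})$ with $\gamma=(j'+1)/(j+1)\neq 1$, which a growth estimate along the progression $z+mc$ shows is impossible for $\rho(u)<1$. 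Borel's theorem then annihilates all coefficients, in particular $v^{n}w\equiv 0$, a contradiction. Hence $\rho(f)=q<\infty$ and $u$ is a polynomial. Since $F=f^{n}f^{(k)}$ is then a transcendental periodic entire function of finite order $q$, and any such function has order exactly $1$ (write $F(z)=\Phi(e^{2\pi i z/c})$; finiteness of order forbids essential singularities of $\Phi$ at $0,\infty$, so $\Phi$ is a Laurent polynomial and $\rho(F)=1$), I conclude $q=1$ and $u=az+b$.

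With $u$ linear the difference $u(z+c)-u(z)=ac$ is constant, so the coefficients $C_j:=A_je^{(j+1)ac}-B_j$ all have order $<1$, and Borel's theorem applied to $\sum_{j}C_je^{(j+1)u}\equiv 0$ forces $C_j\equiv 0$, i.e.
\[
v(z+c)^{j}w(z+c)\,e^{(j+1)ac}=v(z)^{j}w(z)\qquad\text{for every }j\text{ with }d^{\,n-j}\neq 0.
\]
If $d\neq 0$ all of these hold; taking $j=0,1$ and dividing yields $v(z+c)e^{ac}=v(z)$, whence $f(z+c)-d=v(z+c)e^{u(z+c)}=v(z)e^{u(z)}=f(z)-d$, so $f$ is periodic of period $c$.

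If $d=0$ only the top identity ($j=n$) survives, giving $\psi(z)/\psi(z+c)=e^{-(n+1)ac}$ for $\psi:=v^{n}w$. Thus the zero set of $\psi$ is invariant under translation by $c$, whereas $\lambda(\psi)\le\rho(\psi)=\lambda(f)<q=1$; since a nonempty $c$-invariant zero set has convergence exponent at least $1$, $\psi$ must be zero-free, hence $v$ is zero-free of order $<1$ and therefore constant. Then $f=(\text{const})\,e^{az+b}$, which is periodic of period $2\pi i/a$. In all cases $f$ is periodic, completing the proof; the two genuinely non-routine ingredients are the exclusion of infinite order through Borel's theorem (together with the $\rho_2(f)<1$ growth argument that makes $u(z+c)-u(z)$ transcendental) and the order-one reduction that handles the degenerate case $d=0$.
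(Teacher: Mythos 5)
Your overall strategy (factor $f-d=v\,e^{u}$, expand $f^{n}f^{(k)}$ as an exponential sum, and separate terms by Borel's theorem) is close in spirit to what the paper does for its generalizations (Theorems~\ref{th3} and \ref{th4}), and most of your steps are sound: the exclusion of $\rho(f)=\infty$ via the transcendence of $u(z+c)-u(z)$, the linear case, and the $d=0$ endgame using the $c$-invariance of the zero set of $\psi$ all work. The genuine gap is the step where you reduce to $u=az+b$: the claim that a transcendental periodic entire function of finite order must have order exactly $1$ is false. Writing $F(z)=\Phi(e^{2\pi iz/c})$, finiteness of $\rho(F)$ only gives $\log|\Phi(w)|=O\bigl((\log|w|)^{\rho+\varepsilon}\bigr)$, which for $\rho>1$ does not exclude an essential singularity of $\Phi$ at $0$ or $\infty$. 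Concretely, the theta function $\sum_{n\in\mathbb{Z}}e^{-n^{2}}e^{2inz}$ is a periodic entire function of order $2$, and $\sum_{n\ge 0}e^{-n^{3}}e^{2\pi inz}$ is one of order $3/2$; periodic entire functions realize every order in $[1,\infty]$. So the parenthetical justification (``finiteness of order forbids essential singularities of $\Phi$'') is wrong, and with it the conclusion $q=1$.

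This matters because the case $\deg u=p\ge 2$ cannot be dismissed by your Borel separation alone: there the relevant exponent difference $q(z)=u(z+c)-u(z)$ has degree $p-1$, while the coefficients have order $\rho(v)=\lambda(f-d)$, which is only known to be $<p$; if $p-1\le\rho(v)<p$, the coefficients are not small relative to $e^{(j+1)q}$, Borel's theorem does not separate those terms, and one is left with identities such as $v(z+c)/v(z)=e^{-q(z)}$ carrying no immediate contradiction. What is actually needed here is the sharper difference-equation estimate of Chiang and Feng (\cite[Theorem~9.2]{CF}): a finite-order meromorphic solution of $v(z+c)=e^{q(z)}v(z)$ with $\deg q=p-1\ge 1$ must satisfy $\rho(v)\ge p$, which then contradicts $\rho(v)<\rho(f)=p$. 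This is exactly how Proposition~\ref{period-function-1} eliminates the case $\deg(g)\ge 2$, and it is the missing ingredient in your argument; the remainder of your proof can stand once this step is repaired.
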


The main objective of this subsection is to improve and generalize Theorems~\ref{LZ1}~and~\ref{LZ2} by replacing $ f(z)^{n} f^{(k)}(z) $ with an arbitrary differential monomial.

A differential monomial $ M(z,f) $ generated by $ f(z) $ is a finite product of $f(z)$ and its derivatives, that is,
\begin{equation}\label{mo}
M(z,f)=f(z)^{\lambda_{0}}\left(f^{\prime}(z)\right)^{\lambda_{1}} \cdots\left(f^{(n)}(z)\right)^{\lambda_{n}},
\end{equation}
where $\lambda_{0},\ldots,\lambda_{n}$ are non-negative integers. 
The quantities 
\begin{equation} \label{dw}
\gamma_M:=\lambda_{0}+\cdots+\lambda_{n}\quad \text{and} \quad  \Gamma_M:=\lambda_{1}+2\lambda_{2}+\cdots+n\lambda_{n}
\end{equation} 
are called the degree and the weight of $ M(z,f) $, respectively.

Our first result deals with the case when $ f(z) $ has, in particular, the value~$ 0 $ as a Borel exceptional value.

\begin{thm}\label{th3}
	Let $ f(z) $ be a transcendental entire function with $ \lambda(f)< \rho(f) \le \infty $, and suppose that $ M(z,f) $ is a periodic function with period $ c $. Then the following holds:
	\begin{enumerate}
		\item If $ \rho_2(f)<1 $, then $ f(z)=e^{az+b} $, where $ a,b $ are non-zero constants and $ e^{\gamma_M ac}=1 $.
		\item If $ 1\le\rho_2(f)< \infty$ and $ \lambda(f) < \rho_2(f)$, then $ f(z) $ is $ c $-periodic.
	\end{enumerate}
\end{thm}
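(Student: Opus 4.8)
\emph{Plan.} My strategy is to exploit the Borel exceptional value $0$ through a Hadamard factorization, and then to combine it with the growth constraints that the periodicity of $M(z,f)$ imposes.

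First I would record the basic setup. Since $\lambda(f)<\rho(f)$, write $f=he^{g}$, where $h$ is the canonical product built from the zeros of $f$, so that $\rho(h)=\lambda(f)<\rho(f)$ and $g$ is entire. Then $\rho(f)=\rho(e^{g})$, and the hyper-order satisfies $\rho_2(f)=\rho(g)$ if $g$ is transcendental and $\rho_2(f)=0$ if $g$ is a polynomial. Differentiating gives $f^{(j)}=(D_j h)\,e^{g}$, where $D_j$ is a linear differential operator with coefficients that are polynomials in $g',\dots,g^{(j)}$ and $D_0 h=h$; hence
\[ M(z,f)=e^{\gamma_M g}\prod_{j}\bigl(D_j h\bigr)^{\lambda_j}=:e^{\gamma_M g}B, \]
with $B$ entire of order at most $\max(\lambda(f),\rho(g))<\infty$, and the same computation yields $\rho_2(M)=\rho_2(f)$. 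The periodicity $M(z+c,f)=M(z,f)$ is equivalent to
\[ e^{\gamma_M\left(g(z+c)-g(z)\right)}=\frac{B(z)}{B(z+c)}. \]

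For part (1) the key input is the elementary trichotomy for a nonconstant $c$-periodic entire function $F$: writing $F(z)=\Psi(e^{2\pi i z/c})$ with $\Psi$ analytic on $\CC^{*}$, if $F$ has finite order then $\Psi$ is a Laurent polynomial and $\rho(F)=1$, whereas if $F$ has infinite order then $\rho_2(F)\ge 1$. Applying this to $F=M(z,f)$: if $g$ were transcendental, $M$ would have infinite order while $\rho_2(M)=\rho_2(f)<1$, contradicting the trichotomy; hence $g$ is a polynomial, $M$ has finite order $\deg g=\rho(f)$, and being nonconstant periodic, $\rho(M)=1$, so $\deg g=1$, say $g(z)=az+b'$ with $a\neq0$. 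The periodicity relation then reads $B(z)=e^{\gamma_M ac}B(z+c)$, so $B(z)/B(z+c)$ is a nonzero constant and the zero set of $B$ is invariant under $z\mapsto z\pm c$; a nonempty such set contains an arithmetic progression and has convergence exponent $\ge1$, contradicting $\rho(B)\le\lambda(f)<1$. Thus $B$ is zero-free of order $<1$, hence constant, which gives $e^{\gamma_M ac}=1$ and forces each factor $D_j h$ (with $\lambda_j>0$) to be a nonzero constant; solving these with $g$ linear makes $h$ constant, so $f=e^{az+b}$.

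For part (2), now $\rho_2(f)\ge1$ forces $g$ transcendental, so $\rho(g)=\rho_2(f)$ and $\lambda(f)<\rho(g)$. In the periodicity relation the right-hand side has finite order $\le\rho(B)<\infty$, while $e^{P}$ has infinite order for transcendental entire $P$; hence $\beta:=g(z+c)-g(z)$ is a polynomial. Consequently $f(z+c)$ and $f(z)$ share the exponential factor $e^{g}$, namely $f(z+c)=\tilde h\,e^{g}$ and $f(z)=h\,e^{g}$ with $\tilde h=h(z+c)e^{\beta}$, and periodicity becomes the shift identity $\prod_j(D_j\tilde h)^{\lambda_j}=\prod_j(D_j h)^{\lambda_j}$. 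Writing $M(z,f)=f^{\gamma_M}\Lambda$ with $\Lambda=\prod_j(f^{(j)}/f)^{\lambda_j}$, the lemma on logarithmic derivatives together with $\lambda(f)<\rho(f)$ gives $T(r,\Lambda)=S(r,f)$, and periodicity yields $v^{\gamma_M}=\Lambda(z)/\Lambda(z+c)$ for $v:=f(z+c)/f(z)=\tfrac{h(z+c)}{h(z)}e^{\beta}$, so that $v$ is a small function of $f$. The remaining step, which I expect to be the main obstacle, is to upgrade this to the $c$-periodicity of $f$: one must show the shift multiplier $v$ cannot genuinely perturb $f$, i.e. produce a constant $\eta$ with $f(z+c)=\eta f(z)$ and $\eta^{\gamma_M}=1$ (whence $f$ is periodic of period dividing $\gamma_M c$). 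Here the trichotomy is unavailable, since $M$ now has infinite order with $\rho_2(M)\ge1$, which is precisely why the conclusion weakens to mere periodicity; I would instead invoke the results of Section~\ref{Sec3} on the periodicity of polynomials in $f$ with non-constant (small) coefficients, applied to $M=\Lambda f^{\gamma_M}$, together with the strict inequality $\lambda(f)<\rho_2(f)$ to control $B$ and $\tilde h$.
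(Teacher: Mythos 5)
Your overall strategy---factor $f=he^{g}$ via the Borel exceptional value $0$, write $M=Be^{\gamma_M g}$ with $B$ of finite order, and exploit the relation $e^{\gamma_M(g(z+c)-g(z))}=B(z)/B(z+c)$---is the same as the paper's, and several of your individual steps (e.g.\ deducing that $B$ is constant in part (1) from the translation invariance of its zero set) are correct. The first genuine gap is that the ``trichotomy'' on which all of part (1) rests is false in both branches. A nonconstant $c$-periodic entire function of finite order need not have order $1$, and $\Psi$ need not be a Laurent polynomial: $F(z)=\sum_{n\ge0}e^{-n^{2}}e^{2\pi inz/c}$ is periodic with $\log M(r,F)\asymp r^{2}$, hence of order $2$. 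Likewise, infinite order does not force hyper-order $\ge1$: taking coefficients $a_n=e^{-n(\log n)^{2}}$ gives $\log\log M(r,F)\asymp r^{1/2}$, so $F$ is periodic of infinite order with $\rho_2(F)=1/2$. (The true general facts are only $\rho(F)\ge1$ for nonconstant periodic $F$, and the Laurent-polynomial conclusion under an exponential-type hypothesis.) Consequently you can conclude neither that $g$ is a polynomial from $\rho_2(M)<1$, nor that $\deg g=1$ from ``$\rho(M)=1$''. The paper gets both conclusions from Proposition~\ref{period-function-1}: since $B$ has finite order, $q(z)=g(z)-g(z+c)$ must be a polynomial, so some derivative of $g$ is periodic and \cite[Lemma~5.1]{YY} yields ``$g$ polynomial or $\rho(g)\ge1$''; and if $\deg g\ge2$, the Chiang--Feng estimate \cite[Theorem~9.2]{CF} applied to $B(z+c)=e^{q(z)}B(z)$ forces $\rho(B)\ge\deg g$, contradicting $\rho(B)\le\lambda(f)<\rho(f)=\deg g$. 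Some such argument must replace your lemma.

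The second gap is that part (2) is not actually proved: you arrive at ``$v:=f(z+c)/f(z)$ is a small function of $f$ satisfying $v^{\gamma_M}=\Lambda(z)/\Lambda(z+c)$'' and then explicitly defer ``the main obstacle''. The Section~\ref{Sec3} results you gesture at do not close it: Theorem~\ref{th2} applies to polynomials in $f$ with at least \emph{two} nonzero terms, whereas $M=\Lambda f^{\gamma_M}$ is a single term with a small multiplier. The missing idea in the paper is a leading-coefficient comparison in $h'$: writing $H=\pi^{\gamma_M}(h')^{\Gamma_M}+Q(\pi,h')$ with $\deg_{h'}Q\le\Gamma_M-1$ and substituting into $e^{\gamma_M q}=H/H_c$, one finds that unless $\pi(z+c)^{\gamma_M}\equiv e^{-\gamma_M q(z)}\pi(z)^{\gamma_M}$, the resulting identity bounds $T(r,h')$ by $O(r^{\max\{\rho(\pi),\deg q\}})+O(\log r)$, which is incompatible with $\rho(h)=\rho_2(f)>\rho(\pi)$ and with $\deg q+1\le\rho(h)$ coming from \cite[Theorem~9.2]{CF}; the surviving alternative gives $f(z+c)^{\gamma_M}=f(z)^{\gamma_M}$ and hence periodicity of period $c$ or $\gamma_M c$. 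Without this (or an equivalent) computation, your part (2) is a setup rather than a proof.
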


Clearly, if an entire function $ f(z) $ is periodic, then $ f^{(k)}(z) $ must be periodic. The converse is not always true, as shown by the function $ f(z)=e^z+z $. However, the following consequence of Theorem~\ref{th3} shows, under certain condition, that the converse holds.

\begin{cor}
Let $ f(z) $ be a transcendental entire function, and suppose there exists a constant $ d\neq0 $ such that $ \lambda(f-d)< \rho(f) \le \infty $. If $f^{(k)}(z) $ is a periodic function with period $ c $, where $ k\in\mathbb{N} $, then the conclusions of Theorem~\ref{th3} hold.
\end{cor}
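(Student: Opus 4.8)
The plan is to reduce the corollary directly to Theorem~\ref{th3} via a translation that moves the nonzero Borel exceptional value $d$ to the origin. I would set $g(z) := f(z) - d$. Since $d$ is a constant and $f$ is transcendental entire, $g$ is again transcendental entire, and the basic growth indicators are unchanged: one has $\rho(g) = \rho(f)$ and $\rho_2(g) = \rho_2(f)$, while the zeros of $g$ are exactly the $d$-points of $f$, so that $\lambda(g) = \lambda(f-d)$. Hence the hypothesis $\lambda(f-d) < \rho(f) \le \infty$ becomes $\lambda(g) < \rho(g) \le \infty$, which is precisely the standing assumption of Theorem~\ref{th3} for $g$.

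Next I would check that the periodicity hypothesis passes to a monomial in $g$. Because $k \in \mathbb{N}$ is at least $1$, we have $f^{(k)}(z) = (f(z) - d)^{(k)} = g^{(k)}(z)$, so $g^{(k)}$ is periodic with period $c$. Writing $M(z,g) := g^{(k)}(z)$, this is a differential monomial in $g$ with degree $\gamma_M = 1$ and weight $\Gamma_M = k$, and $M(z,g)$ is $c$-periodic. Thus all hypotheses of Theorem~\ref{th3} are in force for $g$, and I would simply invoke it.

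Applying Theorem~\ref{th3} to $g$ yields the desired dichotomy after translating back to $f = g + d$. In the case $\rho_2(g) = \rho_2(f) < 1$, conclusion~(1) gives $g(z) = e^{az+b}$ with $a,b$ nonzero and $e^{\gamma_M ac} = e^{ac} = 1$; then $f(z) = e^{az+b} + d$, and since $g(z+c) = e^{ac} g(z) = g(z)$ both $g$ and $f = g+d$ are $c$-periodic. In the case $1 \le \rho_2(f) < \infty$ together with $\lambda(f-d) = \lambda(g) < \rho_2(g) = \rho_2(f)$, conclusion~(2) gives that $g$ is $c$-periodic, whence $f = g + d$ is $c$-periodic as well.

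Since the argument is in essence a change of variables, I do not expect a genuine obstacle; the only points that require care are the (routine) invariance of $\rho$, $\rho_2$ and of the zero-counting exponent under the additive shift $f \mapsto f - d$, and the bookkeeping needed to restate conclusion~(1) for $f$ itself---where the exponential is shifted by $d$---rather than for $g$. The hypothesis $d \neq 0$ merely guarantees that the statement is not already contained in Theorem~\ref{th3}, which handles the case $d = 0$ directly.
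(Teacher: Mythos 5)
Your proposal is correct and is exactly the paper's argument: the authors prove the corollary by setting $g(z)=f(z)-d$, observing $M(z,g)=g^{(k)}(z)=f^{(k)}(z)$ is periodic, and invoking Theorem~\ref{th3} for $g$. Your write-up merely fills in the routine invariance of $\rho$, $\rho_2$ and $\lambda$ under the shift, which the paper leaves implicit.
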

	
This can be proved by taking $ g(z)=f(z)-d $ and $ M(z,g) =g^{(k)}(z)$. The rest follows immediately.

In contrast to Theorem~\ref{th3},  the next result shows that when $ f(z) $ has, in particular, a nonzero Borel exceptional value, the condition ``$ \lambda(f) < \rho_2(f)$" is not needed, and there is no restriction on the  growth of $ f(z) $.

\begin{thm}\label{th4}
	Let $ f(z) $ be a transcendental entire function, and suppose there exists a constant $ d\neq0 $ such that $ \lambda(f-d)< \rho(f) \le \infty $. If $ M(z,f) $ is a periodic function with period~$ c $ and $ \lambda_{0}>0 $, then $ f(z) $ is $ c $-periodic.
\end{thm}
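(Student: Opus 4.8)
My plan is to turn the nonzero Borel exceptional value into a factorization of $f$ and then recover the periodicity of $f$ from that of $M(z,f)$ through a Borel-type argument on exponential sums. Since $\lambda(f-d)<\rho(f)$, I would first use a Hadamard/Weierstrass factorization to write
\[
f(z)=d+\pi(z)e^{\alpha(z)},
\]
where $\pi$ is the canonical product built from the $d$-points of $f$, so that $\rho(\pi)=\lambda(f-d)$ is finite, and $\alpha$ is entire (a polynomial of degree $\rho(f)$ when $\rho(f)<\infty$). Setting $g:=f-d$, every derivative factors as $g^{(k)}=R_ke^{\alpha}$ for $k\ge1$, where $R_k$ is a differential polynomial in $\pi,\dots,\pi^{(k)}$ and $\alpha',\dots,\alpha^{(k)}$; since $g$ is transcendental, no $R_k$ vanishes identically.

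Writing $s:=\lambda_1+\cdots+\lambda_n=\gamma_M-\lambda_0$ and expanding $(\pi e^{\alpha}+d)^{\lambda_0}$ by the binomial theorem, I obtain
\[
M(z,f)=\sum_{j=0}^{\lambda_0}A_j(z)\,e^{(j+s)\alpha(z)},\qquad A_j=\binom{\lambda_0}{j}d^{\lambda_0-j}\pi^{j}\prod_{k=1}^{n}R_k^{\lambda_k}.
\]
Because $d\neq0$, $\pi\not\equiv0$ and $R_k\not\equiv0$, the extreme coefficients $A_0$ and $A_{\lambda_0}$ are not identically zero, and each $A_j$ is a small function of $e^{\alpha}$, i.e.\ $T(r,A_j)=o(T(r,e^{\alpha}))$. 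Periodicity of $M$ gives $M(z+c,f)=M(z,f)$; since $M(z+c,f)=M(z,F)$ with $F(z):=f(z+c)$ and $F-d=\pi(z+c)e^{\alpha(z+c)}$, putting $\Delta(z):=\alpha(z+c)-\alpha(z)$ turns the two expansions into
\[
\sum_{j=0}^{\lambda_0}\Bigl(A_j(z)-A_j(z+c)\,e^{(j+s)\Delta(z)}\Bigr)e^{(j+s)\alpha(z)}=0.
\]

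The heart of the argument is to force each bracket to vanish, i.e.\ $A_j(z)=A_j(z+c)e^{(j+s)\Delta(z)}$ for $j=0,\dots,\lambda_0$; call these relations $(\star_j)$. When $\rho(f)<\infty$ this is routine: $\alpha$ is a polynomial of degree $q\ge1$, $\Delta$ has degree $q-1$, so the brackets have order $<q$, while the frequencies $(j+s)\alpha$ are pairwise distinct, their differences being nonzero multiples of a degree-$q$ polynomial; Borel's lemma on exponential sums then kills every bracket. Granting $(\star_j)$, inserting the explicit form of $A_j$ makes all the $R_k$ and binomial factors cancel in the quotient $(\star_1)/(\star_0)$, which reduces exactly to
\[
\pi(z)=\pi(z+c)\,e^{\Delta(z)}.
\]
This is the one place where the hypothesis $\lambda_0>0$ is used, as it guarantees that both $(\star_0)$ and $(\star_1)$ are available. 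Multiplying by $e^{\alpha(z)}$ yields $\pi(z)e^{\alpha(z)}=\pi(z+c)e^{\alpha(z+c)}$, that is $g(z)=g(z+c)$, whence $f(z+c)=f(z)$ and $f$ is $c$-periodic.

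The main obstacle is establishing $(\star_j)$ when $\rho(f)=\infty$, for then $\alpha$ is transcendental, $\Delta$ need not be a polynomial, and the order comparison used above is unavailable. The observation that rescues the argument—and explains why Theorem~\ref{th4} imposes no restriction on the growth of $f$—is that the coefficients involve $\alpha$ only through its derivatives, so that $T(r,A_j)=O\bigl(T(r,\alpha)+T(r,\pi)\bigr)$, where $T(r,\alpha)=O\bigl(\log T(r,e^{\alpha})\bigr)$ and $T(r,\pi)$ is of finite order; hence $A_j$ stays a small function of $e^{\alpha}$ however fast $\alpha$ grows. I would thus read the displayed identity as a vanishing exponential sum in the $2(\lambda_0+1)$ frequencies $(j+s)\alpha(z)$ and $(j+s)\alpha(z+c)$ with these small coefficients and invoke the growth-robust exponential-sum lemma of Section~\ref{Sec3}: frequencies built from distinct multiples of $\alpha$ differ by an infinite-order exponential and cannot be merged, so the only alternative to the impossible conclusion $A_0\equiv A_{\lambda_0}\equiv0$ is that the terms pair off as $(j+s)\alpha(z)$ with $(j+s)\alpha(z+c)$, which is exactly $(\star_j)$. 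The remaining care—controlling the shifted coefficients $A_j(\cdot+c)$ and the attendant exceptional sets in the infinite-order regime—is precisely what the lemmas of Section~\ref{Sec3} supply.
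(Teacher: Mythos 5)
Your argument is correct and follows essentially the same route as the paper's own proof: both factor $f-d=\pi e^{h}$, expand $M(z,f)$ binomially into a polynomial in $e^{h}$ with coefficients small relative to $e^{h}$, invoke the term-by-term periodicity result of Section~\ref{Sec3} (Theorem~\ref{th2}) to get the relations you call $(\star_j)$, and divide two of them so that the common factors cancel and $\pi e^{h}$ is seen to be $c$-periodic. The only cosmetic difference is that you pair the two lowest-degree terms where the paper uses the two highest, and you split off the finite-order case, which the paper's uniform argument makes unnecessary.
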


The conclusion in Theorem~\ref{th3}(1) means clearly that $f(z)$ is periodic of period $c$ or~$\gamma_Mc$. Furthermore, from the proofs of Theorems~\ref{th3} and \ref{th4}, the conclusion of $ f(z) $ being $ c $-periodic also means that $f(z)$ is periodic of period $c$ or $\gamma_Mc$.

\subsection{Differential polynomials with at least two non-zero terms}

Define the differential polynomial  $ P(z,f) $ by
\begin{eqnarray}\label{dif}
P(z,f)=\sum_{j=1}^{m}\alpha_jM_j(z,f).
\end{eqnarray}
Here $\alpha_j$ are non-zero constants and $ M_j(z,f) $ are differential monomials given by 
\begin{equation}\label{mo2}
M_j(z,f)=f(z)^{\lambda_{0j}}\left(f^{\prime}(z)\right)^{\lambda_{1j}} \cdots\left(f^{(n)}(z)\right)^{\lambda_{nj}},
\end{equation}
where $\lambda_{0j},\ldots,\lambda_{nj}$ are  non-negative integers. 
 
Let $\gamma_1,\gamma_2,\ldots,\gamma_m$ be the degrees of  $M_1(z,f), M_2(z,f), \ldots, M_m(z,f)$, respectively. Similarly, let  $\Gamma_1,\Gamma_2,\ldots,\Gamma_m$ be the weights of $M_1(z,f), M_2(z,f), \ldots, M_m(z,f)$, respectively. Both degree and weight of differential monomials  are given in \eqref{dw}. The total degree $\gamma_{P}$ and total weight $\Gamma_P$ of $P(z,f)$ are defined, respectively, by
$$
\gamma_{P}=\max_{1\leq j\leq m}\gamma_{j} \quad \text{and}\quad  \Gamma_P=\max_{1\leq j\leq m}\Gamma_{j}.
$$
Since the differential polynomial $P(z,f)$ is supposed to be periodic, we may assume, without loss of generality, that $\gamma_{j}>0$, for all $j\in \{1,\ldots,m\}$.  

Before stating our main results, we need to fix some notations. 
Define the sequence of positive integers $ \delta_1, \delta_2, \ldots, \delta_l $ as follows: 
	\begin{equation}\label{seq}\begin{split}
	\delta_1&=\min_j\gamma_j , \\
	\delta_2 &= \min_j\{\gamma_j: \gamma_j\neq \delta_1 \}, \\
	&\; \vdots\\
	\delta_l &= \min_j\{\gamma_j: \gamma_j\neq \delta_i, i=1, \ldots, l-1 \}= \gamma_{P} . 
	\end{split}
	\end{equation}
Clearly $ 1\le l\le m $ and the sequence $ \{\delta_i\} $ is strictly increasing.
We denote by $\Lambda(\delta_i)$ the set that contains the indices of the terms in \eqref{dif} with the highest weights among those of degree~$\delta_i$. Now, we define the subset $\Lambda_P\subset \{\delta_1,\delta_2,\ldots,\delta_l\}$~by
$$
\Lambda_P=\left\lbrace \delta_i:\sum_{k\in\Lambda(\delta_i)}\alpha_k\neq 0\right\rbrace .
$$
The set $ \Lambda_P $ is illustrated by the following differential polynomial
	$$
	P(z,f)=f''(z)f(z)^2-2(f'(z))^2f(z)-f'(z)f(z)^2+f''(z)f(z)-(f'(z))^2+f''(z)+f'(z),
	$$
where $\delta_1=1$, $ \delta_2=2 $ and $\delta_3=\gamma_P=3$. Here, we have  $\Lambda_{P}=\{1,3\}$, since
$$
\sum_{k\in\Lambda(1)}\alpha_k=1,\quad \sum_{k\in\Lambda(2)}\alpha_k=1-1=0 \quad\text{and}\quad \sum_{k\in\Lambda(3)}\alpha_k=1-2=-1.
$$

In this subsection, we discuss the periodicity of $f(z)$ according to its hyper-order. To start with, we assume that the hyper-order of $f(z)$ is $ <1 $.

\begin{thm}\label{th6}
	Let $f(z)$ be a transcendental entire function with $\rho_2(f)<1$, and suppose that there exists $ d\in\mathbb{C} $ such that $ \lambda(f-d)<\rho(f)\le \infty $. Suppose that $P(z,f)\not\equiv 0$ is periodic with period $c$, $\lambda_{0j}>0$ for every $j\in \{1,\ldots,m\}$ and one of the following holds
	\begin{itemize}
		\item [(i)] $d=0$,
		\item [(ii)] $d\neq 0$ and $\lambda_{01}=\cdots=\lambda_{0m}=\lambda>0$,
		\item[(iii)] $d\neq 0$ and $ \gamma_P\in \Lambda_{P} $.
	\end{itemize}
	Then $ f(z) $ is $ c $-periodic.
\end{thm}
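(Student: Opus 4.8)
The plan is to exploit the Borel exceptional value $d$ to factorize $f$, and then to reinterpret the periodicity of $P(z,f)$ as a statement about a polynomial in the single exponential $e^{\alpha}$ with slowly growing coefficients, to which the results of Section~\ref{Sec3} apply.

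First I would record the factorization
\[
f(z)=d+g(z)e^{\alpha(z)},
\]
which is available because $\lambda(f-d)<\rho(f)$ and $\rho_2(f)<1$: here $g$ is the canonical product over the zeros of $f-d$, so $\rho(g)=\lambda(f-d)<\rho(f)$, and $\alpha$ is entire with $\rho(\alpha)=\rho_2(f)<1$ (a polynomial when $\rho(f)<\infty$). Since the constant $d$ disappears under differentiation, every derivative factorizes as $f^{(i)}=e^{\alpha}L_i(g)$ for $i\ge1$, where $L_0(g)=g$ and $L_{i+1}(g)=L_i(g)'+\alpha'L_i(g)$, whose top part in $\alpha'$ is $(\alpha')^{i}g$. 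Substituting into \eqref{dif}--\eqref{mo2} I obtain a representation
\[
P(z,f)=\sum_{\delta}C_{\delta}(z)\,e^{\delta\alpha(z)},
\]
where the largest exponent is the total degree $\gamma_P$ and each $C_{\delta}$ is entire of order at most $\max\{\lambda(f-d),\rho(\alpha)\}<\rho(e^{\alpha})$, hence small relative to $e^{\alpha}$. Tracking the highest power of $\alpha'$ shows that the leading contribution to the top coefficient $C_{\gamma_P}$ equals $\bigl(\sum_{k\in\Lambda(\gamma_P)}\alpha_k\bigr)g^{\gamma_P}(\alpha')^{\Gamma}$ for the maximal weight $\Gamma$ among the degree-$\gamma_P$ terms. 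The role of hypotheses (i)--(iii) is exactly to keep this representation non-degenerate. When $d=0$ each $f^{\lambda_{0j}}=g^{\lambda_{0j}}e^{\lambda_{0j}\alpha}$ is a single term, so $M_j$ feeds only the exponent $e^{\gamma_j\alpha}$ and the exponents are precisely the distinct degrees; when $d\neq0$ the binomial expansion of $(d+ge^{\alpha})^{\lambda_{0j}}$ spreads $M_j$ across the exponents $\gamma_j-\lambda_{0j},\dots,\gamma_j$, so several monomials may feed one exponent, and it is then $\gamma_P\in\Lambda_P$ (iii) or the common $f$-power $\lambda$ (ii) that prevents cancellation in the decisive coefficient.

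Next, since $P(z,f)$ is periodic, I would compare $P(z+c,f)$ with $P(z,f)$ via $f(z+c)=d+g(z+c)e^{\alpha(z+c)}$ and invoke the results of Section~\ref{Sec3} on the periodicity of polynomials in an entire function with slowly growing coefficients. This forces $e^{\alpha}$ to be quasi-periodic, namely $\alpha(z+c)-\alpha(z)=\tau$ for a constant $\tau$ (so that $\alpha'$ is $c$-periodic), together with the matching relations $C_{\delta}(z+c)=e^{-\delta\tau}C_{\delta}(z)$ for every surviving exponent. Feeding these into the top relation and using that its leading part is $g^{\gamma_P}$ times a $c$-periodic factor, I would deduce $g(z+c)=\omega\,e^{-\tau}g(z)$ for a root of unity $\omega$ with $\omega^{\gamma_P}=1$, equivalently $f(z+c)-d=\omega\,(f(z)-d)$.

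Finally, iterating $f(z+c)-d=\omega(f(z)-d)$ gives $f(z+kc)-d=\omega^{k}(f(z)-d)$, so $f(z+\gamma_P c)=f(z)$ and $f$ is $c$-periodic in the sense fixed in the paper. I expect the hard part to be the reduction step: proving that periodicity of the mixed sum $\sum_{\delta}C_{\delta}e^{\delta\alpha}$ truly forces $\alpha(z+c)-\alpha(z)=\mathrm{const}$ and the coefficient relations. This is where the difference analogue of Nevanlinna theory enters and precisely why $\rho_2(f)<1$ is imposed, and it rests on the linear independence of the $e^{\delta\alpha}$ over functions of smaller growth. The subsidiary difficulty, peculiar to $d\neq0$, is that after the binomial expansion several monomials contribute to the same exponent, so one must exclude cancellation in the decisive coefficient $C_{\gamma_P}$; ruling this out is exactly the content of conditions (ii) and (iii).
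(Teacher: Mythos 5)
Your overall strategy coincides with the paper's: factor $f=d+ge^{\alpha}$ through the Borel exceptional value, rewrite $P(z,f)=\sum_{\delta}C_{\delta}(z)e^{\delta\alpha(z)}$ as a polynomial in $e^{\alpha}$ with small coefficients, invoke Theorem~\ref{th2} to make the individual terms periodic, use Proposition~\ref{period-function-1} to force $\deg\alpha=1$, and use (ii)/(iii) (via what is Lemma~\ref{lem2} in the paper) to guarantee the decisive coefficient does not cancel. Up to that point you are on track.

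The gap is in your final step, where you claim to deduce $g(z+c)=\omega e^{-\tau}g(z)$ from the relation $C_{\gamma_P}(z+c)=e^{-\gamma_P\tau}C_{\gamma_P}(z)$ ``using that its leading part is $g^{\gamma_P}$ times a $c$-periodic factor.'' The coefficient $C_{\gamma_P}$ is a genuine \emph{differential polynomial} in $g$, namely $\beta\,g^{\gamma_P}(\alpha')^{\Gamma}+Q(g,\alpha')$ with $\deg_{\alpha'}Q\le\Gamma-1$, and in the present setting $\alpha'$ is a \emph{constant} (since $\deg\alpha=1$), so there is no growth separation in $\alpha'$ that would let you isolate the term $g^{\gamma_P}$; a multiplier relation for $C_{\gamma_P}$ does not factor through one for $g$. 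Concretely, for $P(z,f)=f f''-(f')^2$ and $f=ze^{z}$ one has $C=gg''-(g')^2=-1$, which satisfies every such relation trivially, while $g=z$ satisfies no relation $g(z+c)=\omega e^{-\tau}g(z)$ and $f$ is not periodic. What saves the theorem --- and what your sketch never uses --- is the hypothesis $\lambda_{0j}>0$ for \emph{every} $j$: the paper first shows (order $<1$ plus the multiplier relation, Remark~\ref{period-rem-1}) that the relevant coefficient $\tilde H_p$ is a nonzero \emph{constant}, and then observes that since each monomial carries the factor $\pi^{\lambda_{0j}}$ with $\lambda_{0j}>0$, the expression \eqref{eqq8} vanishes at any zero of the canonical product $\pi$; hence $\pi$ is zero-free, of order $<1$, therefore constant, and $f=e^{az+b}+d$. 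You correctly identified the roles of (ii) and (iii), but you assigned no role to $\lambda_{0j}>0$ beyond bookkeeping of exponents, and without the zero-counting argument the passage from the constancy of $C_{\gamma_P}$ to the quasi-periodicity of $g$ is unjustified (and, by the example above, false in general).
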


The conclusion of Theorem~\ref{th6} may not hold if $P(z,f)\equiv 0$. For example, the function $f(z)=e^{z^2}$ is not periodic and 
$$
P(z,f)=f'(z)^2f(z)-f''(z)f(z)^2+2f(z)^3\equiv 0.
$$

Example~\ref{exam1}(2) shows that the conclusion of Theorem~\ref{th6} may not hold if the function $ f(z) $ does not appear in each term of $ P(z,f) $, i.e., $ \lambda_{0j}=0 $ for some $ j $.

Theorem~\ref{th6} improves \cite[Theorem~1]{CLW} in the sense that in our result, the differential polynomial does not need to have one term only with the highest degree.

From the proof of Theorem~\ref{th6}, we see that no condition on $\lambda_{0j}$ and $\gamma_{P}$ is needed when~$d$ is a Picard exceptional value of $ f(z) $. We state this without proof as follows.

\begin{cor}\label{Th4.2}
	Let $f(z)$ be a transcendental entire function with a finite Picard exceptional value $d$ and $\rho_2(f)<1$. Suppose that $P(z,f)\not\equiv0$ is a periodic function with period~$c$. Then $ f(z) $ is $ c $-periodic.
\end{cor}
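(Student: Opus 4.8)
The plan is to revisit the proof of Theorem~\ref{th6} in the special case that $d$ is a Picard exceptional value and to pin down which steps become redundant. First I would record the reduction that makes the extra hypotheses (i)--(iii) superfluous: since $f(z)-d$ omits the value $0$, it has no zeros, so Hadamard factorization gives $f(z)=d+e^{h(z)}$ with $h$ entire and non-constant (as $f$ is transcendental). In particular $\lambda(f-d)=0<\rho(f)$, so the exceptional-value hypothesis of Theorem~\ref{th6} is met automatically, while $\rho_2(f)<1$ translates into $\rho(h)=\rho_2(f)<1$. The crucial feature is that here the canonical product of the $d$-points of $f$ is \emph{constant}; this is exactly what removes the cross terms that (i)--(iii) are designed to handle.

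Next I would expand $P(z,f)$ in powers of $e^{h}$. Each derivative satisfies $f^{(k)}(z)=Q_k(z)e^{h(z)}$ for $k\ge1$, where $Q_k$ is the Bell/Fa\`a di Bruno differential polynomial in $h',\dots,h^{(k)}$; hence $\rho(Q_k)\le\rho(h)<1$. Substituting into \eqref{mo2} and expanding $(d+e^{h})^{\lambda_{0j}}$ by the binomial theorem yields
\begin{equation}\label{plan-exp}
P(z,f)=\sum_{t=0}^{\gamma_P}B_t(z)\,e^{t\,h(z)},
\end{equation}
where each $B_t$ is a finite combination of products of the $Q_k$ with constant ($d$-dependent) factors. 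Consequently $\rho(B_t)\le\rho(h)<1\le\rho(f)$, so every $B_t$ is a small function of $f$, i.e. $T(r,B_t)=S(r,e^{h})$.

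The third step uses the periodicity. As the coefficients $\alpha_j$ are constants, $P(z,f)$ being $c$-periodic means $P(z+c,f(z+c))=P(z,f)$; writing $H(z)=h(z+c)-h(z)$ (an entire function with $\rho(H)\le\rho(h)<1$) and using $f^{(k)}(z+c)=Q_k(z+c)e^{h(z+c)}$ in \eqref{plan-exp}, I would collect everything over the exponentials $e^{t\,h(z+c)}$ to obtain
\begin{equation*}
\sum_{t=0}^{\gamma_P}\bigl[B_t(z+c)-B_t(z)e^{-tH(z)}\bigr]e^{t\,h(z+c)}=0 .
\end{equation*}
The bracketed coefficients have order $<1$, hence are small relative to each $e^{(t-t')h(z+c)}$; since the exponents differ pairwise by the non-constant functions $(t-t')h(z+c)$, a Borel-type lemma on vanishing exponential sums with small coefficients forces $B_t(z+c)=B_t(z)\,e^{-tH(z)}$ for every $t$. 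This is the point at which $\rho_2(f)<1$ is used: it is the standard hyper-order-$<1$ requirement under which the shift estimates (e.g. $T(r,f(z+c))=T(r,f)+S(r,f)$) and this lemma are valid.

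Finally I would extract the period. Since $P(z,f)$ is a genuine, hence non-constant, periodic function, the sum \eqref{plan-exp} cannot reduce to its $t=0$ term, so there is $t_0\ge1$ with $B_{t_0}\not\equiv0$. For that index $e^{-t_0H}=B_{t_0}(z+c)/B_{t_0}(z)$ is a nonvanishing entire function of order $<1$, which is possible only if $H\equiv a$ is constant. Then $h(z+c)=h(z)+a$ gives $h^{(k)}(z+c)=h^{(k)}(z)$ for all $k\ge1$, so every $Q_k$, and hence every $B_t$, is $c$-periodic; comparing this with $B_{t_0}(z+c)=e^{-t_0a}B_{t_0}(z)$ yields $e^{t_0a}=1$, whence $f(z+t_0c)=d+e^{h(z)+t_0a}=f(z)$ and $f$ is $c$-periodic. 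The main obstacle is precisely this last extraction: one must know that a term of positive index survives, i.e. that $P(z,f)$ does not degenerate to a constant (a constant being periodic for every $c$ but carrying no information), and it is the Picard hypothesis that keeps the bookkeeping in \eqref{plan-exp} clean enough to see this — in contrast to the merely-Borel case $\lambda(f-d)<\rho(f)$ of Theorem~\ref{th6}, where the non-constant canonical product $u$ of the $d$-points introduces extra factors $u(z+c)/u(z)$ in the shift comparison that conditions (i)--(iii) are there to control.
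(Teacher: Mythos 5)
Your route is, in substance, the one the paper intends: Corollary~\ref{Th4.2} is stated without a separate proof, with only the remark that in the proof of Theorem~\ref{th6} the canonical product $\pi(z)$ of the $d$-points becomes constant when $d$ is Picard exceptional, so that the hypotheses (i)--(iii) are no longer needed. Your steps reconstruct exactly that argument in self-contained form: $f=d+e^{h}$ with $\rho(h)<1$, the expansion $P(z,f)=\sum_{t}B_t(z)e^{th(z)}$ with coefficients of order $<1$, a Borel separation of the terms (this is what Theorem~\ref{th2} delivers in the paper), and the order-$<1$ argument forcing $h(z+c)-h(z)$ to be constant (the content of Remark~\ref{period-rem-1} and Lemma~\ref{lem}). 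All of those steps are sound.

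The one genuine gap is the extraction step you yourself flag: the claim that some $B_{t_0}$ with $t_0\ge 1$ survives. You justify it by calling $P(z,f)$ ``genuine, hence non-constant'' and by asserting that the Picard hypothesis keeps the bookkeeping clean enough to guarantee this; neither follows from the stated hypothesis, which is only $P(z,f)\not\equiv 0$. The Picard hypothesis does not prevent total collapse onto the $t=0$ term: take $f(z)=e^{-z^{2}/2}+d$ with $d\neq 0$, so that $d$ is Picard exceptional, $\rho_2(f)=0<1$ and $f$ is not periodic, and observe that
$$
P(z,f)=f(z)^{2}+f''(z)f(z)-\bigl(f'(z)\bigr)^{2}-2d\,f(z)-d\,f''(z)\equiv -d^{2},
$$
a non-zero constant, hence invariant under every shift. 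So both the corollary and your proof require ``periodic'' to mean non-constant periodic (consistent with the paper's definition of $c$-periodicity via fundamental periods, but it must be invoked explicitly). Once that is granted, the step does close, and for a structural reason worth recording: in your expansion the coefficient $B_0$ receives contributions only from the pure powers $M_j=f(z)^{\gamma_j}$, each contributing the constant $\alpha_j d^{\gamma_j}$, so $B_0$ is automatically a constant; hence if every $B_t$ with $t\ge 1$ vanished, $P(z,f)$ would itself be constant, contrary to the (strengthened) assumption. It is this observation, not the Picard hypothesis, that rescues the extraction.
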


From Corollary~\ref{Cor} below and from the proof of Theorem~\ref{th6} we see that $ f(z) $ in the conclusion of Theorem~\ref{th6} and Corollary~\ref{Th4.2} has the form $f(z)=e^{az+b} +d$, where $ a,b $ are non-zero constants and $ a $ satisfies $ e^{\delta ac} =1 $, where $\delta$ is an integer not exceeding~$\gamma_{P}$.

\medskip
Next, we present some sufficient conditions that ensure the periodicity of $f(z)$ when its hyper-order is $ \ge1 $.

\begin{thm}\label{th7}
	Let $f(z)$ be a transcendental entire function with $1\le\rho_2(f)<\infty$, and suppose that there exists $ d\in\mathbb{C} $ such that $ \lambda(f-d)<\rho_2(f) $. Suppose that $P(z,f)$ is a periodic function with period $c$, $\Lambda_P\neq \emptyset$ and one of the following holds:
	\begin{itemize}
		\item [(i)] $d=0$;
		\item[(ii)] $d\neq 0$ and $\gamma_{P}\in \Lambda_P$.
	\end{itemize}
	Then $f(z)$ is $ c $-periodic.
\end{thm}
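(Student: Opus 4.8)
The plan is to use the Borel exceptional value to put $f$ into an explicit factored form, to translate the periodicity of $P(z,f)$ into an identity between two ``polynomials in an exponential with small coefficients'', and then to feed that identity into the results of Section~\ref{Sec3}. Since $\lambda(f-d)<\rho_2(f)=:\sigma$ with $1\le\sigma<\infty$, I would first apply Hadamard factorization to write
\[
f(z)-d=h(z)\,e^{Q(z)},
\]
where $h$ is the canonical product over the $d$-points of $f$, so $\rho(h)=\lambda(f-d)<\sigma$, while $e^{Q}=(f-d)/h$ has $\rho_2(e^{Q})=\sigma$; hence $Q$ is transcendental entire with $\rho(Q)=\sigma\ge1$. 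The payoff is that every quantity built from $h$, $h'/h$ and the derivatives $Q',Q'',\dots$ has order at most $\sigma$ and is therefore a \emph{small function} relative to any power $e^{\delta Q}$, whose characteristic grows like $\exp(r^{\sigma})$.

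Because the coefficients $\alpha_j$ are constant, the periodicity of $P(z,f)$ is equivalent to the functional identity $P(z,f(z+c))=P(z,f(z))$. Writing $f^{(i)}=f\,(\psi^{i}+\cdots)$ with $\psi=h'/h+Q'$ and collecting terms by the power of $e^{Q}$ they generate, I would expand $P(z,f)=\sum_{s}c_{s}(z)\,e^{sQ(z)}$, a polynomial in $e^{Q}$ with small coefficients $c_s$. When $d=0$ each $M_j$ feeds only the single power $e^{\gamma_j Q}$, and the leading (highest-weight) part of the coefficient of $e^{\delta_i Q}$ is $\big(\sum_{k\in\Lambda(\delta_i)}\alpha_k\big)h^{\delta_i}(Q')^{\Gamma}$ for the corresponding weight $\Gamma$; thus $\Lambda_P\ne\emptyset$ guarantees that at least one $c_s$ is genuinely nonzero. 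When $d\ne0$ the binomial expansion of $(he^{Q}+d)^{\lambda_{0j}}$ spreads each monomial over several powers of $e^{Q}$, so the coefficient of a middle power is an uncontrolled sum; but the \emph{top} power $e^{\gamma_P Q}$ is fed only by the leading terms of the maximal-degree monomials, and its coefficient is nonzero precisely when $\gamma_P\in\Lambda_P$. This is exactly why case~(ii) must single out the top degree whereas case~(i) may anchor on any degree in $\Lambda_P$.

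The key reduction is to insert $\sum_s \widetilde c_s(z)\,e^{sQ(z+c)}=\sum_s c_s(z)\,e^{sQ(z)}$ (the tilde denoting $z\mapsto z+c$) into the Section~\ref{Sec3} machinery and conclude that $Q(z+c)-Q(z)$ equals a constant $\beta$. Granting this, $Q^{(i)}(z+c)=Q^{(i)}(z)$ for $i\ge1$, so the $Q'$-part of the anchored coefficient is shift-invariant; comparing that coefficient on the two sides forces $\big(h(z+c)/h(z)\big)^{\delta}=e^{-\delta\beta}$ for the relevant degree $\delta\le\gamma_P$, whence $h(z+c)/h(z)=e^{-\beta}\zeta_0$ is constant with $\zeta_0^{\delta}=1$. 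Consequently $f(z+c)-d=\zeta_0\,(f(z)-d)$, and iterating yields $f(z+\delta c)=f(z)$; since $\delta c$ is an integer multiple of $c$, the function $f$ is $c$-periodic, in agreement with the remark following Theorem~\ref{th4}.

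The main obstacle is establishing that the shift acts on $e^{Q}$ merely as multiplication by a constant, i.e. that $Q(z+c)-Q(z)$ is constant. For $\rho_2(f)<1$ one could lean on the difference analogue of the lemma on the logarithmic derivative, but here $\sigma\ge1$ makes those shift estimates unavailable, so the argument must be structural: a Borel-type linear-independence argument for the exponentials $\{e^{sQ(z+c)}\}$ and $\{e^{sQ(z)}\}$ with small coefficients. The delicate point is the degenerate configuration in which a shift of $Q$ is itself (affinely) proportional to $Q$, which genuinely occurs when $\sigma=1$, for example $Q(z)=e^{\mu z}$ with $Q(z+c)=e^{\mu c}Q(z)$, since then exponents coming from the two sides may interact. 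This is resolved by comparing the dominant exponent along a direction where $\operatorname{Re}Q\to+\infty$, which forces $e^{\mu c}=1$ and, in general, $Q(z+c)-Q(z)=\text{const}$; isolating this growth-and-independence analysis is precisely the role of the results of Section~\ref{Sec3}, and it is where the real work lies.
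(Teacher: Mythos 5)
Your overall strategy is the paper's: factor $f-d=\pi(z)e^{h(z)}$ with $\rho(\pi)=\lambda(f-d)<\rho(h)=\rho_2(f)$, rewrite $P(z,f)$ as a polynomial in $e^{h}$ with coefficients small relative to $e^{h}$ (displays \eqref{dd7} and \eqref{eee}), use $\Lambda_P\neq\emptyset$ (resp.\ $\gamma_P\in\Lambda_P$) to see that the anchored coefficient has non-vanishing leading part $\beta_s\pi^{\delta_s}(h')^{\tilde{\Gamma}_s}$ and hence is $\not\equiv0$ (this is Lemma~\ref{lem2}), and then apply Theorem~\ref{th2} to isolate a single periodic term $\tilde{H}_s(z)e^{\delta_s h(z)}$. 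Up to that point your proposal is sound. (Two small caveats: the coefficient of the top power is nonzero \emph{if} $\gamma_P\in\Lambda_P$, not ``precisely when''; and Theorem~\ref{th2} needs at least two nonzero terms, so the one-term case must be handled separately, as in the proof of Theorem~\ref{th6}.)

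The genuine gap is in your final step. You propose to first prove that $q(z):=h(z+c)-h(z)$ is a constant $\beta$ and only then compare coefficients using the shift-invariance of $h'$. But the relation $\tilde{H}_s(z+c)e^{\delta_s h(z+c)}=\tilde{H}_s(z)e^{\delta_s h(z)}$ only forces $e^{\delta_s q}=\tilde{H}_s/\tilde{H}_s(\cdot+c)$ to be of finite order, i.e.\ $q$ is a \emph{polynomial}; since $\rho(\tilde{H}_s)$ may be as large as $\rho(h')=\rho_2(f)$, the Chiang--Feng estimate only yields $\deg q+1\le\rho(\tilde{H}_s)\le\rho_2(f)$, which pins $q$ down to a constant only when $\rho_2(f)<2$. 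For $\rho_2(f)\ge2$ nothing in your argument excludes $\deg q\ge1$, and the proposed remedy (``comparing the dominant exponent along a direction where $\operatorname{Re}Q\to+\infty$'') is a heuristic, not a proof. The paper never needs $q$ constant: following the proof of Theorem~\ref{th3}(2), one substitutes $h'(z+c)=h'(z)+q'(z)$ into \eqref{c2b}/\eqref{dd9} and rewrites the identity as
\begin{equation*}
\bigl(\pi_c(z)^{\delta_s}-e^{-\delta_s q(z)}\pi(z)^{\delta_s}\bigr)\beta_s\,(h'(z))^{\tilde{\Gamma}_s}
=\text{(terms of degree }\le\tilde{\Gamma}_s-1\text{ in }h'\text{)} .
\end{equation*}
If the left-hand coefficient were $\not\equiv0$, taking characteristics would give $T(r,h')=O\bigl(r^{\max\{\rho(\pi),\deg q\}}\bigr)$, contradicting $\rho(h')=\rho_2(f)>\max\{\rho(\pi),\deg q\}$. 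Hence $\pi_c^{\delta_s}e^{\delta_s h_c}=\pi^{\delta_s}e^{\delta_s h}$, i.e.\ $(f_c-d)^{\delta_s}=(f-d)^{\delta_s}$, so $f(z+c)-d=\zeta\,(f(z)-d)$ with $\zeta^{\delta_s}=1$, which is your intended conclusion reached without the unproved constancy of $q$.
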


From the proof of Theorem~\ref{th7}, one may see that the condition ``$ \gamma_{P}\in \Lambda_P $" is not needed  if $\lambda_{0j}=0$ for all $j\in \{1,\ldots,m\}$.

The condition $\Lambda_P \neq \emptyset$ is a sufficient, but not a necessary condition for the periodicity~of~$f(z)$. The following example illustrates this point. For example, the function $f(z)=e^{\sin z}$ satisfies  $ \lambda(f)=0<1=\rho_2(f) $ and solves the differential equation
$$
P(z,f)=(f'(z))^2-f''(z)f(z)=e^{2\sin z}\sin z.
$$
Here, $P(z,f)$ and $f(z)$ are both periodic while $\Lambda_P = \emptyset$.

%
%
\section{Further results }\label{Sec3}
In this section, we give some observations on the periodicity of meromorphic functions as well as on the periodicity of polynomials in an entire function $ f(z) $ with non-constant coefficients.  The results presented here will be used later to prove the main theorems of Section~\ref{Sec2}, and are also of independent interest.

\subsection{Periodicity of some meromorphic functions}

We start with following basic result.
\begin{prop}\label{period-function-1}
	Let $ v(z)\not\equiv 0 $ be a meromorphic function of order $ \rho(v)<\infty $, and $ g(z)$ be a non-constant entire function. If $ F(z) = v(z)e^{g(z)} $ is a periodic function of period~$ \tau $, then either $ \rho(g)\ge 1 $, or $ g(z) $ is polynomial. In the case when $ g(z) $ is polynomial, we have
	$$
	\left\{
	\begin{array}{lcl}
	\rho(v) \ge\deg(g),& \text{ if}& \deg(g) \ge 2;\\
	v(z+\tau)/v(z) \text{  is constant}, & \text{ if}& \deg(g)=1.
	\end{array}
	\right.
	$$
\end{prop}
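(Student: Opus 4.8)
The plan is to convert the periodicity of $F$ into a first-order difference relation for $v$ and read off the structure of $g$ from it. Writing out $F(z+\tau)=F(z)$ and cancelling gives
\[
\frac{v(z+\tau)}{v(z)}=e^{g(z)-g(z+\tau)}=e^{G(z)},\qquad G:=g-g(\cdot+\tau),
\]
with $G$ entire. Since $v$ has finite order, so does the quotient on the left (shifting does not change the order, and forming quotients does not increase it), hence $e^{G}$ has finite order; because $e^{H}$ has infinite order for every transcendental entire $H$, this forces $G$ to be a polynomial. Thus, irrespective of $g$, the difference $g(z)-g(z+\tau)$ is automatically a polynomial.

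Next I would solve this difference equation. A polynomial right-hand side $G$ admits a polynomial particular solution $g_{0}$ (the operator $p\mapsto p(\cdot)-p(\cdot+\tau)$ lowers the degree by exactly one and is onto in each degree), so every entire solution has the form $g=g_{0}+h$ with $h$ a $\tau$-periodic entire function. If $g$ is transcendental, then $h$ is non-constant; a non-constant $\tau$-periodic entire function attains some value $w_{0}$ on the whole progression $\{z_{1}+k\tau\}_{k\in\mathbb{Z}}$, and counting these zeros of $h-w_{0}$ gives $\lambda(h-w_{0})\ge 1$, whence $\rho(h)\ge 1$. As $g_{0}$ is a polynomial, $\rho(g)=\rho(h)\ge 1$. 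This produces exactly the stated dichotomy: either $g$ is a polynomial, or $\rho(g)\ge 1$.

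It remains to treat the polynomial case. If $\deg g=N$, then $G=g-g(\cdot+\tau)$ has degree exactly $N-1$ (the leading coefficients cancel and $\tau\ne 0$). For $N=1$ this makes $G$ constant, so $e^{G}$ is constant and $v(z+\tau)/v(z)$ is constant, as required. For $N\ge 2$ I would compare growth: on one hand $e^{G}$ with $\deg G=N-1$ satisfies $T(r,e^{G})=(1+o(1))\,C\,r^{\,N-1}$ for some $C>0$, so it has order exactly $N-1$; on the other hand $e^{G}=v(z+\tau)/v(z)$ is entire, so $T(r,e^{G})=m\bigl(r,v(z+\tau)/v(z)\bigr)$. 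Feeding this into the Chiang--Feng logarithmic-difference estimate $m\bigl(r,v(z+\tau)/v(z)\bigr)=O(r^{\rho(v)-1+\varepsilon})$ yields $r^{\,N-1}=O(r^{\rho(v)-1+\varepsilon})$, that is $\rho(v)\ge N=\deg g$.

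The step I expect to be the crux is precisely this last lower bound $\rho(v)\ge\deg g$ when $\deg g\ge 2$. The crude comparison $T(r,e^{G})\le T(r,v(z+\tau))+T(r,v)+O(1)=O(r^{\rho(v)+\varepsilon})$ only gives $\rho(v)\ge\deg g-1$, and the telescoped identity $v(z+m\tau)=v(z)\exp\bigl(\sum_{j=0}^{m-1}G(z+j\tau)\bigr)$, while it does exhibit growth of order $\deg g$ along the progression $z_{0}+m\tau$, loses one power when converted into a lower bound for the proximity function, since the set where $v$ is large is only one-dimensional. Recovering the missing power is exactly what the sharp Chiang--Feng shift estimate supplies, so the whole argument hinges on invoking that difference lemma rather than the elementary characteristic inequality.
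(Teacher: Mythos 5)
Your proof is correct and follows essentially the same route as the paper: both derive the relation $v(z+\tau)/v(z)=e^{g(z)-g(z+\tau)}$, conclude from $\rho(v)<\infty$ that the exponent is a polynomial, extract the dichotomy from the resulting periodicity (the paper differentiates to make $g^{(m+1)}$ periodic and cites a lemma of Yang--Yi, while you solve the difference equation to write $g$ as a polynomial plus a $\tau$-periodic function --- the same idea in a different dress), and obtain $\rho(v)\ge\deg(g)$ from the Chiang--Feng shift estimate, which is exactly the reference the paper invokes at that step.
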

\begin{proof}
	Denote $q(z):=g(z)-g(z+\tau)$. Then, by periodicity of $F(z)$,
	\begin{equation}\label{period-1}
	v(z+\tau) =e^{q(z)} v(z).
	\end{equation}
	This implies that $ q(z) $ must be polynomial since $ \rho(v)<\infty $. If $\deg(q)= m$, then $g^{(m+1)}(z)$ is periodic, and according to \cite[Lemma 5.1]{YY} we have either $g^{(m+1)}(z)$ is a constant, i.e., $ g(z) $ is polynomial with $ \deg(g)=m+1 $, or $\rho(g)\geq 1$. Now, in the case when $ g(z) $ is polynomial and $ \deg(g) \ge2 $, we obtain from \eqref{period-1} and \cite[Theorem~9.2]{CF} that $$ \rho(v) \ge m+1 = \deg (g). $$ If $ \deg(g) =1 $, then again from \eqref{period-1} we get that $ v(z+\tau)/ v(z) $ is constant.	
\end{proof}

\begin{rem}\label{period-rem-1}
	If $ \rho(v)<1 $ in  Proposition~\ref{period-function-1}, then $ v(z) $ must be constant. Indeed, If $ q(z) $ in \eqref{period-1} is not constant, then we get a contradiction with $ \rho(v)<1 $. Thus, $ q(z) $ must be a constant, and then from \cite[Lemma 3.3]{BL} and \cite[Lemma 5.1]{YY}, $ v(z) $ is constant.
\end{rem}

As a consequence of Proposition~\ref{period-function-1}, we have the following improvement of \cite[Corollary~2]{GY}.

\begin{cor}\label{Cor}
	Let $ f(z) $ be a non-constant periodic entire function with $ \rho_2(f)<1 $.
	Then, for any $ d\in \mathbb{C} $, we have  either $ f(z)=e^{az+b}+d $, for some $ a,b \in \mathbb{C} $, or 
	$$
	\lambda(f-d) =\lambda\left(f^{(k)}\right)= \rho(f) ,\quad \text{for all }\; k\in \mathbb{N}. 
	$$
\end{cor}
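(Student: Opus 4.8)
The plan is to prove the dichotomy in Corollary~\ref{Cor} by splitting on whether $f(z)-d$ has few or many zeros, and then feeding the "few zeros" case into Proposition~\ref{period-function-1} via its Remark~\ref{period-rem-1}. First I would fix $d\in\CC$ and set $g(z)=f(z)-d$, which is periodic of the same period $\tau$ as $f(z)$, entire, and with $\rho_2(g)=\rho_2(f)<1$. The quantity to control is $\lambda(f-d)=\lambda(g)$. Either $\lambda(g)<\rho(g)$ (the "Borel exceptional value" regime) or $\lambda(g)=\rho(g)$. In the latter case I would separately argue that $\lambda(f^{(k)})=\rho(f)$ for all $k$, so the second alternative of the conclusion holds.

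In the regime $\lambda(g)<\rho(g)$, the Hadamard/Weierstrass factorization gives $g(z)=v(z)e^{h(z)}$ where $v$ is the canonical product over the zeros of $g$, so $\rho(v)=\lambda(g)$, and $h$ is entire. Since $\rho_2(f)<1$ forces $\rho_2(g)<1$, one checks $\rho(v)=\lambda(g)\le\rho(f)$ is finite and, more importantly, that $h$ cannot grow too fast: the key point is that $F(z):=g(z)=v(z)e^{h(z)}$ is periodic of period $\tau$, so Proposition~\ref{period-function-1} applies with $v$ playing the role of $v$ and $h$ the role of $g$. Because $\rho_2(f)<1$, $h$ must be a polynomial (an exponential term of order $\ge1$ inside would push $\rho_2$ up to $1$); and then Remark~\ref{period-rem-1}, together with $\rho(v)=\lambda(g)<1$ in the subcase that matters, forces $v$ to be constant and $\deg h=1$. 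This yields $g(z)=c_0e^{az+b}$, i.e. $f(z)=e^{az+b}+d$ after absorbing constants, which is exactly the first alternative.

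The remaining case is $\lambda(f-d)=\rho(f)$, and here I must also establish the two further equalities $\lambda(f^{(k)})=\rho(f)$ for all $k\in\NN$ and $\lambda(f-d)=\rho(f)$ simultaneously. The inequalities $\lambda(f^{(k)})\le\rho(f^{(k)})=\rho(f)$ are automatic, so the content is the reverse bound $\lambda(f^{(k)})\ge\rho(f)$. I would argue this by contradiction: if $\lambda(f^{(k)})<\rho(f)$ for some $k$, then $f^{(k)}$ has a Borel exceptional value $0$, and I can run the factorization argument on $f^{(k)}$ (which is still periodic with $\rho_2<1$) through Proposition~\ref{period-function-1} to conclude $f^{(k)}(z)=e^{\alpha z+\beta}$, hence $f(z)=e^{\alpha z+\beta}/\alpha^k+(\text{polynomial})$, which is a function of the first alternative type and forces $\lambda(f-d)<\rho(f)$ for all but one exceptional $d$ — contradicting the standing assumption that we are in the case $\lambda(f-d)=\rho(f)$.

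The hard part will be the bookkeeping that makes Proposition~\ref{period-function-1} directly usable: I must verify that the factorization exponent $h$ really has finite order (so that $\rho(v)<\infty$ is legitimate and the Proposition's hypotheses are met) and that $\rho_2(f)<1$ genuinely excludes $\rho(h)\ge1$, ruling out the "$\rho(g)\ge1$" branch of the Proposition and leaving only the polynomial branch. The delicate step is the degree count: I need $\deg h\le1$, since $\deg h\ge2$ would give $\rho(v)\ge\deg h\ge2>\lambda(g)$ via the Proposition, contradicting $\rho(v)=\lambda(g)$ in the subcase $\lambda(g)<1$; and $\deg h=0$ would make $f$ a constant plus $d$, impossible for transcendental $f$. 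Getting these growth comparisons to interlock cleanly — especially ensuring the second alternative's two equalities hold in lockstep rather than just one of them — is where the argument requires the most care.
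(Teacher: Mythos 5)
Your handling of the regime $\lambda(f-d)<\rho(f)$ is essentially the paper's own argument and is sound: factor $f-d=v(z)e^{h(z)}$ with $\rho(v)=\lambda(f-d)<\infty$, use Proposition~\ref{period-function-1} to force $h$ to be a polynomial (a transcendental $h$ would give $\rho_2(f)=\rho(h)\ge 1$), rule out $\deg h\ge 2$ because then $\rho(v)\ge\deg h=\rho(f)>\rho(v)$, and invoke Remark~\ref{period-rem-1} with $\rho(v)<\deg h=1$ to make $v$ constant. Up to the slightly vague phrase ``in the subcase that matters'', this interlocks exactly as in the paper.

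The genuine gap is in how you close the second alternative. You split on $\lambda(f-d)<\rho(f)$ versus $\lambda(f-d)=\rho(f)$ and claim that in the latter case $\lambda(f^{(k)})=\rho(f)$ must follow; your proposed contradiction is that $f^{(k)}(z)=e^{\alpha z+\beta}$ would give $f(z)=e^{\alpha z+\beta}/\alpha^{k}+p_0$ (with $p_0$ constant by periodicity) and hence ``$\lambda(f-d)<\rho(f)$ for all but one exceptional $d$''. This is exactly backwards: for $f(z)=e^{\alpha z+\beta'}+p_0$ one has $\lambda(f-d)=1=\rho(f)$ for \emph{every} $d\neq p_0$, since the zeros of $e^{\alpha z+\beta'}-(d-p_0)$ form an arithmetic progression, and $\lambda(f-d)=0<\rho(f)$ only for the single value $d=p_0$. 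So when the fixed $d$ differs from $p_0$, your standing assumption $\lambda(f-d)=\rho(f)$ is satisfied, no contradiction arises, and indeed $\lambda(f^{(k)})=0<\rho(f)$; the function $f(z)=e^{z}+1$ with $d=0$, $k=1$ shows that your intermediate claim (``$\lambda(f-d)=\rho(f)$ implies $\lambda(f^{(k)})=\rho(f)$'') is simply false. The paper avoids this by organizing the dichotomy around the \emph{shape} of $f$ rather than around the value of $\lambda(f-d)$: one assumes at the outset that $f$ is not of the form $e^{az+b}+d$, and then the conclusion $f^{(k)}(z)=e^{az+b}$, hence $f(z)=e^{az+c}+\mathrm{const}$ by periodicity, contradicts that structural assumption directly. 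Your case split and the paper's are not equivalent, and the discrepancy sits precisely where your argument breaks; to repair it you must carry the hypothesis ``$f$ is not an exponential plus a constant'' into the derivative step instead of the hypothesis ``$\lambda(f-d)=\rho(f)$''.
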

	
	\begin{proof}
		Let $ d \in\mathbb{C} $ be arbitrary. Assume that  $ f(z) $ is not of the form $e^{az+b} +d$. Therefore, if $ \lambda(f-d)<\rho(f)$, then we can write
		$$ f(z)-d=v(z)e^{g(z)} , $$
		where $v(z)$ and $g(z)$ are entire functions such that $\rho(v)=\lambda(f-d)<\infty$. Obviously, $g(z)$ is a polynomial, otherwise it follows from Proposition~\ref{period-function-1} that $\rho_2(f)=\rho(g)\geq 1$, which contradicts our assumption. Now, if $\deg(g)\geq 2$, we may use Proposition~\ref{period-function-1} again to obtain
		$$
		\rho(v)<\rho(f)=\deg(g)\leq \rho(v),
		$$
		which is a contradiction, and thus, $\rho(v)<\deg(g)=1$. From this and Remark~\ref{period-rem-1} we conclude that $ v(z) $ must be constant, that is, $ f(z) $ is of the form $ e^{az+b} +d $, which contradicts the assumption.

		Now, let $ k\in \mathbb{N} $ be arbitrary. Clearly, the periodicity of $f^{(k)}(z)$ follows from that of~$f(z)$. Hence, it follows from the previous reasoning, with $ d=0 $, that either 
		$$
		\lambda\left(f^{(k)}\right)=\rho\left(f^{(k)}\right)=\rho(f),
		$$
		or  $ f^{(k)}(z)$ has the form $ f^{(k)}(z) = e^{az+b} $, i.e., $ f(z) $ is of the form
		$$
		f(z)=e^{az+c}+p(z),
		$$
		where $ c $ is a constant (depending on $ a,b $ and $ k $) and $p(z)$ is a polynomial of degree less than~$k$. Since $f(z)$ is periodic, we conclude that $p(z)$ must be constant, which is not possible by the assumption.  Thus, we have the conclusion of Corollary~\ref{Cor}.
	\end{proof}

Next, we present the following improvement of \cite[Theorem 1]{G}.
\begin{prop}\label{G}
	Let $g_1(z),g_2(z),\ldots,g_n(z)$ be non-constant entire functions such that $g_{j}(z)-g_{k}(z)$ are non-constant whenever $j\neq k$, and let $ v_0(z), v_1(z), \ldots, v_n(z) $ be non-zero meromorphic functions with $ \max\limits_{0\le j\le n} \rho(v_j) <1$. If
	\begin{eqnarray}\label{Gr}
	F(z):=\sum_{k=1}^{n} v_{k}(z) e^{g_{k}(z)}+v_{0}(z),
	\end{eqnarray}
	is periodic of period $\tau$, then each $v_{j}(z)$ ($ j=0,\ldots,n $) must be constant, and for every $ k=1, \ldots,n $, $g_k(z)=p_k(z)+a_k z$, where $a_k$ is a constant and $p_k(z)$ is periodic of period~$\tau$. 	
\end{prop}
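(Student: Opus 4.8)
The plan is to exploit the periodicity identity $F(z+\tau)=F(z)$ to compare the two exponential sum representations of $F$, and then to invoke a Borel-type or Nevanlinna argument to force the exponential terms to match up. Writing out $F(z+\tau)=F(z)$ using \eqref{Gr} gives
\begin{equation*}
\sum_{k=1}^n v_k(z+\tau)e^{g_k(z+\tau)}+v_0(z+\tau)=\sum_{k=1}^n v_k(z)e^{g_k(z)}+v_0(z).
\end{equation*}
Moving everything to one side produces a linear combination of the $2n$ exponentials $e^{g_k(z)}$ and $e^{g_k(z+\tau)}$ (plus the $v_0$ terms) that vanishes identically, with meromorphic coefficients all of order $<1$. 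The first step is to show that for each $k$ the exponent $g_k(z+\tau)-g_k(z)$ is a constant, say $c_k$. This is the heart of the matter: I would argue that if some difference $g_j(z+\tau)-g_j(z)$ were nonconstant, the corresponding exponential could not cancel against any of the finitely many other exponential terms, because the pairwise differences $g_j-g_k$ are nonconstant by hypothesis and the coefficient functions have order strictly below $1$, so they are too small to absorb a genuinely new exponential. The appropriate tool here is a Borel-type lemma on linear combinations of exponentials with small (order $<1$) meromorphic coefficients, of the same flavor as \cite[Lemma 5.1]{YY} and Remark~\ref{period-rem-1}; one matches each $e^{g_k(z+\tau)}$ with the unique $e^{g_k(z)}$ sharing the same growth direction, forcing $g_k(z+\tau)-g_k(z)$ to be constant.

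Once $g_k(z+\tau)-g_k(z)=c_k$ is established for every $k$, the structural conclusion on $g_k$ follows almost immediately. Indeed, the entire function $h_k(z):=g_k(z)-\tfrac{c_k}{\tau}z$ then satisfies $h_k(z+\tau)=h_k(z)$, so $h_k$ is $\tau$-periodic; setting $a_k:=c_k/\tau$ and $p_k:=h_k$ gives exactly the decomposition $g_k(z)=p_k(z)+a_kz$ with $p_k$ periodic of period $\tau$. The remaining claim, that each $v_j$ is constant, I would obtain by substituting $g_k(z+\tau)=g_k(z)+c_k$ back into the periodicity identity. After this substitution the identity reads
\begin{equation*}
\sum_{k=1}^n\bigl(v_k(z+\tau)e^{c_k}-v_k(z)\bigr)e^{g_k(z)}+\bigl(v_0(z+\tau)-v_0(z)\bigr)=0,
\end{equation*}
and since the $e^{g_k}$ are now linearly independent over the field of order-$<1$ meromorphic functions (again by the Borel-type argument, using that the $g_j-g_k$ are nonconstant), each coefficient must vanish. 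This yields $v_0(z+\tau)=v_0(z)$ and $v_k(z+\tau)=e^{-c_k}v_k(z)$ for each $k$. Each of these says that a certain ratio is $\tau$-periodic, and combined with $\rho(v_j)<1$ an application of Remark~\ref{period-rem-1} (or directly \cite[Lemma 3.3]{BL} together with \cite[Lemma 5.1]{YY}) forces every $v_j$ to be constant.

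The main obstacle I anticipate is making rigorous the step that isolates each exponential term and forces the exponent differences to be constant. The pairwise-nonconstant hypothesis on $g_j-g_k$ guarantees distinct growth behavior, but turning this into a clean cancellation argument requires the right form of the Borel lemma adapted to meromorphic coefficients of order $<1$ rather than small functions in the Nevanlinna $S(r,f)$ sense; one must be careful that the coefficients, though of order $<1$, could still be transcendental, so the argument should rest on the order/growth comparison (as in Proposition~\ref{period-function-1} and Remark~\ref{period-rem-1}) rather than on a naive counting of terms. Once that lemma is in place, the rest of the proof is a routine bookkeeping of the resulting coefficient equations.
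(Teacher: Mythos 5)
There is a genuine gap at the central step of your plan. From $F(z+\tau)=F(z)$ and Borel's lemma one can only conclude that each $e^{g_k(z+\tau)}$ cancels against $e^{g_{\sigma(k)}(z)}$ for \emph{some} permutation $\sigma$ of $\{1,\ldots,n\}$; nothing forces $\sigma$ to be the identity. Your justification --- that the pairwise differences $g_j-g_k$ are non-constant --- only prevents two exponents on the \emph{same} side of the identity from coalescing; it says nothing about $g_j(z+\tau)-g_k(z)$ for $j\neq k$, which is the quantity that governs cross-cancellation. Concretely, take $n=2$, $\tau=1$, $g_1(z)=\sin(\pi z)$, $g_2(z)=-\sin(\pi z)$, $v_0=v_1=v_2=1$: all hypotheses of the proposition hold and $F(z)=e^{\sin \pi z}+e^{-\sin \pi z}+1$ has period $1$, yet $g_1(z+1)-g_1(z)=-2\sin(\pi z)$ is not constant. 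So your intermediate claim fails, and with it the identities $v_k(z+\tau)e^{c_k}=v_k(z)$ and the decomposition $g_k=p_k+a_kz$ with $p_k$ of period exactly $\tau$. (The example also shows that the conclusion can only be expected with $p_k$ of period $m_k\tau$ for some positive integer $m_k$, i.e.\ in the paper's ``$\tau$-periodic'' sense.)

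The paper's proof repairs exactly this point, following \cite[Theorem 1]{G}: since $\sigma$ permutes a finite set, iterating $z\mapsto z+\tau$ along the cycle of $\sigma$ containing $k$ produces a positive integer $m_k$ with $v_k(z)e^{g_k(z)}=v_k(z+m_k\tau)e^{g_k(z+m_k\tau)}$, after which Proposition~\ref{period-function-1} and Remark~\ref{period-rem-1}, applied with period $m_k\tau$ in place of $\tau$, give that $v_k$ is constant and $g_k(z)-a_kz$ is periodic of period $m_k\tau$; the term $v_0$ is handled the same way. Your remaining bookkeeping (linear independence of the $e^{g_k}$ over coefficients of order $<1$, and the passage from ``$g_k(z+m_k\tau)-g_k(z)$ constant'' to the decomposition $g_k=p_k+a_kz$) goes through once this correction is made.
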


\begin{proof}
	Notice that $ F(z) $ must be non-constant, otherwise $v_{j}(z)\equiv 0$ ($ j=0,\ldots,n $) by Borel's lemma. We follow the idea of the proof of \cite[Theorem 1]{G}. Since $ F(z)=F(z+\tau) $, it follows from Borel's lemma that, and for any $ k\in \{1,\ldots,n\} $, there exists a fixed integer~$ m_k $ such that
	\begin{equation*}
	v_{k}(z) e^{g_{k}(z)} = v_{k}(z+m_k\tau) e^{g_{k}(z+m_k\tau)}.
	\end{equation*}
	From Proposition~\ref{period-function-1} and Remark~\ref{period-rem-1}, we get that $ v_k(z) $ ($ k=1,\ldots,n $) are constants. Similarly, Borel's lemma implies that $ v_0(z) $ is periodic. Hence, it is constant as $ \rho(v_0)~<~1 $.
\end{proof}

\subsection{Periodicity of polynomials with non-constant coefficients}

The following theorem was the key in proving several new results concerning Conjecture~\ref{Con} and some of its general forms in \cite{LZ}. It is an extension of the classical result of C. and A. R\'enyi \cite[Theorem 2]{RR}.

\begin{theorem}[{\cite[Theorem 2.1]{LZ}}]\label{LZ}
	Let $ f(z) $ be a transcendental entire function such that $N(r,1/f)=S(r,f)$, $ A(z) $ be non-vanishing meromorphic function satisfying $ T(r,A) = S(r,f) $, and let
	$ P(z) $ be a polynomial with at least two non-zero terms. If $ A(z) P\left(f(z)\right)$ is periodic of period $ c $, then $ f(z) $ is $ c $-periodic.
\end{theorem}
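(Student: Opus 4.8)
Write $P(w)=a_nw^n+\cdots+a_sw^s$ with $a_na_s\neq0$ and $n>s\ge0$, the strict inequality $n>s$ being precisely the hypothesis that $P$ has at least two non-zero terms; put $g(z):=f(z+c)$ and $A_c(z):=A(z+c)$. Periodicity of $A\,P(f)$ gives
\begin{equation}\label{fe}
A(z)\,P(f(z))=A_c(z)\,P(g(z)).
\end{equation}
The plan is to show first that $g=\eta f$ for some constant $\eta$, and then to use the two-term hypothesis to force $\eta^{\,n-s}=1$, so that $f(z+(n-s)c)=f(z)$ and $f$ is $c$-periodic.

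The one consequence of $N(r,1/f)=S(r,f)$ that survives without a growth restriction comes from the lemma on the logarithmic derivative: $T(r,f'/f)=m(r,f'/f)+N(r,f'/f)=S(r,f)$, so $f'/f$, and hence every $f^{(k)}/f$, is a small function of $f$. In particular the zeros of $f$ form an $S(r,f)$-set, $T(r,P(f))=nT(r,f)+S(r,f)$, and by the second main theorem $0$ is the only finite value $a$ with $\overline N(r,1/(f-a))=S(r,f)$; for if $0$ and some $a\neq0$ were both deficient in this strong sense, the second main theorem on $\{0,a,\infty\}$ would yield $T(r,f)=S(r,f)$, a contradiction.

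The heart of the proof is the passage from \eqref{fe} to the relation $g=\eta f$. Since $F:=A\,P(f)$ is $c$-periodic, its divisor of zeros is invariant under $z\mapsto z+c$. As $A$ is non-vanishing with $T(r,A)=S(r,f)$ and the zeros of $f$ form an $S(r,f)$-set, up to an $S(r,f)$-set this divisor is carried by the $\rho_i$-points of $f$, where $\rho_1,\dots,\rho_p$ are the non-zero roots of $P$; its $c$-periodicity then says that $f$ and $g$ take the values $\rho_i$ (and $0$) on the same sets, up to $S(r,f)$. Feeding this value-sharing into the second main theorem for $f$ and for $g$ — a step valid with no hypothesis on the growth of $f$ — one expects to obtain that $g$ is a M\"obius transform of $f$; since $f$ and $g$ are entire and, by the previous paragraph, share $0$ as their sole strongly deficient value, the M\"obius relation should reduce to the homogeneous one $g=\eta f$. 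This extraction is the main obstacle: because no bound on $\rho_2(f)$ is assumed, the difference analogue of the logarithmic derivative lemma — which would at once give $m(r,g/f)=S(r,f)$ and pin down $g/f$ — is unavailable, and when $P$ has only a few distinct roots the value-sharing alone is too weak, so one must exploit the full identity \eqref{fe} together with the smallness of $A$ and of $f'/f$.

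Once $g=\eta f$ is known, substituting into \eqref{fe} yields
\[
\frac{A(z+c)}{A(z)}=\frac{P(f)}{P(\eta f)}=:R(f),
\]
a rational function of $f$. If $R$ is non-constant then $T(r,R(f))\ge T(r,f)+O(1)$, whereas the left-hand side is assembled from the small function $A$ and its shift and so — granting, as one must also justify without a growth hypothesis, that $A(z+c)/A(z)$ is again a small function of $f$ — has characteristic $S(r,f)$; this contradiction forces $R$ to be constant, i.e. $P(\eta w)\equiv\kappa P(w)$. Comparing the top and bottom non-zero terms $a_nw^n$ and $a_sw^s$ gives $\eta^n=\kappa=\eta^s$, hence $\eta^{\,n-s}=1$, so that $f(z+(n-s)c)=\eta^{\,n-s}f(z)=f(z)$ and $f$ is $c$-periodic. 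It is exactly here that the assumption of at least two non-zero terms (i.e. $n>s$) is indispensable: for a single term $R$ is identically constant and the twisting factor $A(z+c)/A(z)$ may be an arbitrary small function, so $f$ need not be periodic — the phenomenon behind the introductory examples and behind the original restriction of C. and A. R\'enyi \cite{RR}.
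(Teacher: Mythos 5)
Your argument is incomplete at exactly the point you flag yourself: the passage from the functional equation $A(z)P(f(z))=A(z+c)P(f(z+c))$ to the relation $f(z+c)=\eta f(z)$ is never carried out. The paragraph that is supposed to produce it consists of statements of the form ``one expects to obtain'' and ``should reduce to'', and the tools you propose for it are either too weak or unavailable. The value-sharing route through the second main theorem degenerates when $P$ has few distinct roots (e.g.\ $P(w)=a_1w+a_0$ leaves you with essentially one shared value besides $0$ and the omitted value $\infty$, far short of what a M\"obius-type conclusion requires), and the difference analogue of the lemma on the logarithmic derivative is, as you say, not applicable since no bound on $\rho_2(f)$ is assumed; the smallness of $A(z+c)/A(z)$ with respect to $f$ is likewise left unjustified and is not automatic for functions of infinite order. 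Everything after the missing step is routine, so the proposal only establishes that the conclusion \emph{would} follow from $f(z+c)=\eta f(z)$ and that the two-term hypothesis is what forces $\eta$ to be a root of unity --- both correct observations, but not the proof.

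The gap is closed in the paper by a different mechanism that avoids both the second main theorem and any difference-type lemma. One regards $A(z)P(f)=\sum_s A(z)a_{\nu_s}f^{\nu_s}$ as a polynomial $Q(f)$ with small non-vanishing coefficients $\alpha_{\nu_s}=A\,a_{\nu_s}$ and applies Theorem~\ref{th2}: dividing the periodicity identity by one term $\alpha_{\nu_m}(z)f(z)^{\nu_m}$ and invoking the Valiron--Mohon'ko theorem together with \cite[Theorem~1.62]{YY} yields \eqref{p2}, that is, each term $\alpha_{\nu_m}(z)f(z)^{\nu_m}$ is separately periodic of period $c$. The hypotheses $N(r,1/f)=S(r,f)$ and $T(r,A)=S(r,f)$ enter only there, with no growth restriction on $f$. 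Since all coefficients equal the same small function $A$ up to multiplicative constants, the ratios in \eqref{F} reduce to powers of $A$, and Corollary~\ref{co-perio} applies: comparing \eqref{p2} for two exponents $\nu_m>\nu_n$ (or using the exact $c$-periodicity of $A$ coming from the constant term, if present) gives $\left(f(z+c)/f(z)\right)^{\nu_m-\nu_n}\equiv1$, which is precisely the identity $f(z+c)=\eta f(z)$ with $\eta^{\nu_m-\nu_n}=1$ that your sketch postulates. If you want a self-contained proof, the step you must actually supply is the term-by-term periodicity \eqref{p2}, not a M\"obius relation between $f$ and its shift.
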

	
The aim of this subsection is to extend Theorem~\ref{LZ}, and give a general study on the periodicity of polynomials in $ f(z) $ with non-constant coefficients. This will be so useful in proving our main theorems.

Let $Q(f)$ be a polynomial in $ f(z) $ with at least two non-zero terms and defined as
\begin{equation}\label{poly}
Q(f)=\sum_{s=1}^{l}\alpha_{\nu_s}(z)f(z)^{\nu_s},\quad l \geq 2, \; \nu_{1}<\cdots<\nu_l,
\end{equation}
where $\alpha_{\nu_s}(z)$ are non-vanishing meromorphic functions and small with respect to $f(z)$. 

The next result gives necessary conditions for the periodicity of $ Q(f) $.
\begin{thm}\label{th2}
	Let $f(z)$ be a transcendental entire function with $N(r,1/f)=S(r,f)$ and let $Q(f)$ be as in \eqref{poly} and be a periodic function of period $c$. Then for all $s\in \{1,\ldots,l\}$, the terms $\alpha_{\nu_{s}}(z)f(z)^{\nu_{s}}$ are periodic of period $c$. Furthermore, for any distinct $m,n\in \{1,\ldots,l\}$ for which $\nu_{m}\nu_n>0$, the functions
	\begin{equation}\label{F}
	F_{m,n}(z):=\frac{\alpha_{\nu_{m}}(z) ^{\nu_n}}{\alpha_{\nu_{n}}(z) ^{\nu_m}}
	\end{equation}
	are periodic of period $c$.
\end{thm}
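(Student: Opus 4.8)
The plan is to convert the periodicity relation into a Borel-type exponential identity and then isolate each monomial. Since $f$ is entire with $N(r,1/f)=S(r,f)$, I would first write $f=\pi e^{h}$, where $\pi$ is the canonical product built from the zeros of $f$ and $h$ is entire; the hypothesis $N(r,1/f)=S(r,f)$ guarantees $T(r,\pi)=S(r,f)$, and $f$ being transcendental forces $h$ to be non-constant. Setting $\tilde\alpha_s:=\alpha_{\nu_s}\pi^{\nu_s}$, which are small meromorphic functions as products of small functions, the polynomial becomes $Q(f)=\sum_{s=1}^{l}\tilde\alpha_s e^{\nu_s h}$, and the periodicity $Q(f)(z)=Q(f)(z+c)$ reads
\[
\sum_{s=1}^{l}\tilde\alpha_s(z)e^{\nu_s h(z)}=\sum_{s=1}^{l}\tilde\alpha_s(z+c)e^{\nu_s h(z+c)}.
\]

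Next I would apply Borel's lemma (as used in the proof of Proposition~\ref{G}) to the vanishing difference of the two sides. Because $h$ is non-constant, for $s\neq t$ the exponents $\nu_s h$ and $\nu_t h$ differ by the non-constant function $(\nu_s-\nu_t)h$, and likewise on the shifted side; hence no two exponents coming from the same side lie in one Borel class, so each class contains at most one exponent from the left and at most one from the right. Since every $\tilde\alpha_s\not\equiv0$, no exponent may appear alone, for then its coefficient would have to vanish. The classes therefore set up a bijection $\sigma$ of $\{1,\dots,l\}$ together with constants $k_s$ such that
\[
\nu_s h(z)-\nu_{\sigma(s)}h(z+c)=k_s,\qquad \tilde\alpha_s(z)=\tilde\alpha_{\sigma(s)}(z+c)\,e^{-k_s}.
\]

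I would then show $\sigma$ is the identity. Differentiating the first relation gives $\nu_s h'(z)=\nu_{\sigma(s)}h'(z+c)$ for all $s$. A zero exponent, if present, is fixed by $\sigma$, while for the non-zero ones $h'(z+c)/h'(z)=\nu_s/\nu_{\sigma(s)}$ is a single constant $\kappa$, whence $\nu_s=\kappa\nu_{\sigma(s)}$; summing over the indices permuted by $\sigma$ forces $\kappa=1$, and distinctness of the $\nu_s$ yields $\sigma=\mathrm{id}$. Consequently $k_s=\nu_s\bigl(h(z)-h(z+c)\bigr)$ is constant and $\tilde\alpha_s(z)=\tilde\alpha_s(z+c)e^{-k_s}$, which rearranges to $\tilde\alpha_s(z)e^{\nu_s h(z)}=\tilde\alpha_s(z+c)e^{\nu_s h(z+c)}$, i.e. $\alpha_{\nu_s}(z)f(z)^{\nu_s}=\alpha_{\nu_s}(z+c)f(z+c)^{\nu_s}$. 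This is the first assertion: every term is $c$-periodic.

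The second assertion is then purely algebraic. Writing $\rho:=f(z+c)/f(z)$, term periodicity gives $\rho^{\nu_s}=\alpha_{\nu_s}(z)/\alpha_{\nu_s}(z+c)$ for each $s$ with $\nu_s\neq0$. For distinct $m,n$ with $\nu_m\nu_n>0$, raising these to suitable powers and eliminating $\rho$ through $\rho^{\nu_m\nu_n}$ yields $\alpha_{\nu_m}(z)^{\nu_n}\alpha_{\nu_n}(z+c)^{\nu_m}=\alpha_{\nu_n}(z)^{\nu_m}\alpha_{\nu_m}(z+c)^{\nu_n}$, that is $F_{m,n}(z)=F_{m,n}(z+c)$. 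The main obstacle I expect lies in the reduction step: justifying $T(r,\pi)=S(r,f)$ so that the $\tilde\alpha_s$ genuinely qualify as small coefficients, and checking the smallness hypothesis of Borel's lemma against the non-constant exponent differences. Once the Borel matching is secured, the combinatorial argument that $\sigma=\mathrm{id}$ is short, and the rest is bookkeeping.
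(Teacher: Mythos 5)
Your argument breaks at the Borel-lemma matching step, in two places. First, the factorization $f=\pi e^{h}$ with $T(r,\pi)=S(r,f)$ does not follow from $N(r,1/f)=S(r,f)$: a canonical product is controlled by its zeros only at the level of $N(r,1/\pi)$, and $T(r,\pi)$ can be comparable to $T(r,f)$ even when the integrated counting function of the zeros is small, since $T(r,f)$ need not dominate $r^{\lambda(f)}$ for all $r$ unless $f$ has regular growth. You flag this yourself, but it is not a removable technicality; it is exactly why the paper never factors $f$ in this proof. Second, and more seriously, the classical Borel lemma requires the coefficients to be small compared with \emph{every} exponential of a difference of exponents lying in distinct classes, i.e.\ $T(r,\tilde\alpha_s)=o\left(T\left(r,e^{\nu_j h(z)-\nu_k h(z+c)}\right)\right)$. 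The cross-differences, in particular $\nu_s\left(h(z)-h(z+c)\right)$, may be non-constant and yet grow far more slowly than $h$: take $h(z)=z^{2}$ and $c=1$, so the difference is linear, while the $\alpha_{\nu_s}$ are only assumed to be $S(r,f)$ and may have order $3/2$. In that situation the Borel classes cannot be separated, the bijection $\sigma$ need not exist, and the matching collapses; the theorem is still true in such cases, but your proof does not reach it.

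The paper's proof sidesteps both issues: it divides the periodicity identity $\sum_s\alpha_{\nu_s}(z+c)f(z+c)^{\nu_s}=\sum_s\alpha_{\nu_s}(z)f(z)^{\nu_s}$ by the single term $\alpha_{\nu_m}(z)f(z)^{\nu_m}$, notes via the Valiron--Mohon'ko theorem that $T(r,f(z+c))\sim T(r,f)$, and applies \cite[Theorem~1.62]{YY} --- a Borel-type theorem for sums identically equal to $1$ whose hypotheses involve only the counting functions $N(r,1/\cdot)$ and $\overline{N}(r,\cdot)$ of the summands, all of which are $S(r,f)$ under the stated assumptions --- to get $\frac{\alpha_{\nu_m}(z+c)}{\alpha_{\nu_m}(z)}\left(\frac{f(z+c)}{f(z)}\right)^{\nu_m}\equiv 1$ directly. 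To salvage your route you would need to justify $T(r,\pi)=S(r,f)$ and replace the classical Borel lemma by a version whose smallness hypotheses are phrased in terms of counting functions, at which point you have essentially reconstructed the paper's argument. Your last step, deriving the periodicity of $F_{m,n}$ from the periodicity of the individual terms, is correct and agrees with the paper.
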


\begin{proof}
	Since $Q(f)$ is periodic of period $c$, it follows that
	\begin{eqnarray}\label{p1}
	\sum_{s=1}^{l}\alpha_{\nu_s}(z+c)f_c^{\nu_s}=\sum_{s=1}^{l}\alpha_{\nu_s}(z)f^{\nu_s},
	\end{eqnarray}
	where $f_c$ stands for $f(z+c)$. Making use of the Valiron-Mohon'ko theorem \cite[Theorem 2.2.5]{L}, it follows $T(r,f_c)\sim T(r,f)$ as $r \to \infty$, probably outside an exceptional set of finite linear measure. Let $m\in \{1,\ldots,l\}$. Then, dividing both sides of \eqref{p1} by $\alpha_{\nu_m}(z)f(z)^{\nu_m}$ yields
	\begin{equation*}
	\sum_{s=1}^l\frac{\alpha_{\nu_s}(z+c)}{\alpha_{\nu_m}(z)}\frac{f_c^{\nu_s}}{f^{\nu_m}}-\sum_{\substack{s=1 \\ s \neq m}}^{l} \frac{\alpha_{\nu_{s}}(z)}{\alpha_{\nu_{m}}(z)} f^{\nu_{s}-\nu_{m}}=1.
	\end{equation*}
	Clearly, for every $s\neq m$,
	$$
	\frac{\alpha_{\nu_s}(z+c)}{\alpha_{\nu_m}(z)}\frac{f_c^{\nu_s}}{f^{\nu_m}}\quad \text{and}\quad \frac{\alpha_{\nu_{s}}(z)}{\alpha_{\nu_{m}}(z)} f^{\nu_{s}-\nu_{m}}
	$$
	cannot be constants. Then, by using \cite[Theorem~1.62]{YY} we obtain
	\begin{equation}\label{p2}
	\frac{\alpha_{\nu_m}(z+c)}{\alpha_{\nu_m}(z)}\left( \frac{f_c}{f}\right) ^{\nu_m}\equiv 1.
	\end{equation}
	Since $m\in \{1,\ldots,l\}$ is arbitrary, we deduce the first assertion that all the terms are periodic of period $ c $.
	The periodicity of the functions \eqref{F} follows directly from \eqref{p2}.
\end{proof}

The conclusions of Theorem~\ref{th2}  cannot be a sufficient condition for the periodicity of $ f(z) $ as shown by the following example.
\begin{example}\label{ex-1}
	For the entire function $f(z)=e^{e^z}\left(\frac{e^z-1}{z}\right)$, the polynomial
	$$
	Q(f)=zf(z) + z^2f(z)^2+z^3f(z)^3
	$$
	is periodic, the coefficients satisfy \eqref{F}, but $ f(z) $ is not periodic.
\end{example}

\begin{rem}
	Gross \cite[Theorem 2]{G} proved that the function $e^{g(z)}+g(z)$ cannot be periodic if $g(z)$ is not periodic. This result is a special case of Theorem~\ref{th2}. Indeed, if $f(z)$ is a transcendental entire function with $N(r,1/f)=S(r,f)$ and $g(z)$ is a small function of $f(z)$, then $f(z)+g(z)$ is periodic of period $c$ if and only if $f(z)$ and $g(z)$ are periodic of period $c$.
\end{rem}

In general, the periodicity of $ f(z) $ is related to the periodicity of the coefficients~of~$Q(f)$. This is shown in the following corollary.

\begin{cor}\label{co-perio}
	Under the hypotheses of Theorem~\ref{th2}, the statements below are equivalent:
	\begin{itemize}
		\item [(i)] One coefficient $\alpha_{\nu_{s}}(z)$ with $\nu_{s}>0$ is $c$-periodic;
		\item [(ii)] All the coefficients are $c$-periodic;
		\item [(iii)] $f(z)$ is $c$-periodic.
	\end{itemize}
\end{cor}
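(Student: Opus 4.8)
The plan is to read everything off the single relation \eqref{p2} obtained inside the proof of Theorem~\ref{th2}, namely
$$\frac{\alpha_{\nu_m}(z+c)}{\alpha_{\nu_m}(z)}\left(\frac{f(z+c)}{f(z)}\right)^{\nu_m}\equiv 1,\qquad m=1,\ldots,l,$$
which says precisely that each term $\alpha_{\nu_m}(z)f(z)^{\nu_m}$ is $c$-periodic. Writing $\phi(z):=f(z+c)/f(z)$, this reads $\alpha_{\nu_m}(z+c)=\alpha_{\nu_m}(z)\,\phi(z)^{-\nu_m}$, and I would establish the equivalence through the cycle (ii)$\Rightarrow$(i)$\Rightarrow$(iii)$\Rightarrow$(ii). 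The implication (ii)$\Rightarrow$(i) is immediate, since at least one exponent is positive (indeed $\nu_l>0$ as $l\ge 2$ and $\nu_1<\cdots<\nu_l$), so periodicity of all coefficients trivially yields that of one coefficient of positive index.

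For (i)$\Rightarrow$(iii), I would suppose $\alpha_{\nu_s}$ with $\nu_s>0$ is $c$-periodic, say $\alpha_{\nu_s}(z+Nc)=\alpha_{\nu_s}(z)$ for some $N\in\mathbb{N}$. Iterating \eqref{p2} $N$ times and telescoping the quotients gives $\big(f(z)/f(z+Nc)\big)^{\nu_s}\equiv1$. The quotient $f(z+Nc)/f(z)$ is meromorphic and its $\nu_s$-th power is identically $1$; since such a function has neither zeros nor poles and takes values only among the finitely many $\nu_s$-th roots of unity, it must reduce to a constant root of unity $\omega$. Hence $f(z+Nc)=\omega\,f(z)$ for all $z$, so $f(z+N\nu_s c)=\omega^{\nu_s}f(z)=f(z)$, which makes $f$ periodic with period $N\nu_s c$, an integer multiple of $c$, i.e. $f$ is $c$-periodic in the sense fixed in the Introduction.

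Finally, for (iii)$\Rightarrow$(ii), I would assume $f$ is $c$-periodic, so $f(z+Nc)=f(z)$ for some $N\in\mathbb{N}$. Multiplying the $N$ consecutively shifted copies of \eqref{p2} and telescoping the factors $f(z+ic)/f(z+(i+1)c)$ collapses the right-hand side to $\big(f(z)/f(z+Nc)\big)^{\nu_m}=1$, whence $\alpha_{\nu_m}(z+Nc)=\alpha_{\nu_m}(z)$ for every $m$; thus each coefficient is periodic with period $Nc$ and therefore $c$-periodic. This closes the cycle. I do not expect a genuine analytic obstacle here, since \eqref{p2} already does the heavy lifting; the only points requiring care are the bookkeeping forced by the convention — the periods produced are always integer multiples of $c$ rather than $c$ itself — and the elementary but essential observation that $(f(z+c)/f(z))^{\nu_s}\equiv1$ forces $f(z+c)/f(z)$ to be a \emph{constant} root of unity, which is exactly where the hypothesis $\nu_s>0$ in (i) is used.
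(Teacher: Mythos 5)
Your proof is correct and follows essentially the same route as the paper: both arguments rest entirely on the relation \eqref{p2}, deducing (i)$\Rightarrow$(iii) from the fact that periodicity of $\alpha_{\nu_s}$ forces $\bigl(f(z+Nc)/f(z)\bigr)^{\nu_s}\equiv 1$, hence $f(z+Nc)=\omega f(z)$ for a $\nu_s$-th root of unity $\omega$, and (iii)$\Rightarrow$(ii) by the reverse telescoping. Your write-up merely makes explicit the iteration and the observation that a meromorphic function with values in a finite set is constant, which the paper leaves implicit.
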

\begin{proof}
	Suppose that one of the coefficients $\alpha_{\nu_s}(z)$ with $\nu_s >0$ is $c$-periodic. Then, by \eqref{p2}, $f(z)^{\nu_s}$ has the same period as $\alpha_{\nu_{s}}(z)$. In fact, if $\alpha_{\nu_{s}}(z)$ is of period $mc$, for some positive integer $m$, then $f(z)$ is either of period $mc$ or $\nu_{s}mc$. This shows that if one of the coefficients $\alpha_{\nu_s}(z)$ with $\nu_s >0$ is $c$-periodic, then $f(z)$ is $c$-periodic either.
	Now, let $f(z)$ be a $c$-periodic function. Then, from \eqref{p2} we see that all the coefficients $\alpha_{\nu_{s}}(z)$ must have the same period as $f(z)$. Therefore, for all $s\in \{1,\ldots,l\}$, $\alpha_{\nu_{s}}(z)$ is $c$-periodic.
\end{proof}

One can see that Theorem~\ref{LZ} can be obtained from Theorem~\ref{th2} and Corollary~\ref{co-perio}.
Indeed, if there exist distinct $ m ,n$ such that $ \nu_m\nu_n>0 $ and $  \alpha_{\nu_{m}}(z) / \alpha_{\nu_{n}}(z) $ is constant, then from \eqref{F}, we see that both $  \alpha_{\nu_{m}}(z) $  and $ \alpha_{\nu_{n}}(z) $ are $c$-periodic, and then $ f(z) $ is $ c $-periodic by Corollary~\ref{co-perio}. 

In particular, if all the coefficients of $Q(f)$ are of order $ <1 $ and $Q(f)-Q(0)$ has at least one constant coefficient, then Corollary~\ref{co-perio} and \cite[Lemma 5.1]{YY} reveal that all the coefficients of $Q(f)$ must be constants.

Next, we show how the growth of the coefficients of $ Q(f) $ is related to the zeros of $ f(z) $ in Theorem~\ref{th2}.

\begin{cor}\label{Cor2}
	In Theorem~\ref{th2}, assume in addition that the coefficients of $Q(f)$ are all entire functions of order $ <1 $, and at least one of them is non-constant. Then $ \lambda(f)\ge 1 $. 
\end{cor}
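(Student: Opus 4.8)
The plan is to combine the per-term periodicity already supplied by Theorem~\ref{th2} with the elementary fact that a non-constant entire function of order $<1$ must have zeros. First I would pick a non-constant coefficient $\alpha_{\nu_s}(z)$, which exists by hypothesis, and note that Theorem~\ref{th2} guarantees the single term $\phi(z):=\alpha_{\nu_s}(z)f(z)^{\nu_s}$ to be periodic of period $c$. Since $\alpha_{\nu_s}(z)$ is entire of order $<1$ and non-constant, it must have at least one zero, because a zero-free entire function of order $<1$ is necessarily constant; hence $\phi$ also has a zero and is a non-constant $c$-periodic entire function.

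Next I would convert periodicity into a lower bound on the zero-counting function. If $\phi(a)=0$, then $\phi(a+kc)=0$ for every $k\in\mathbb{Z}$, so the zeros of $\phi$ contain a whole arithmetic progression; counting its members inside $|z|\le r$ gives $n(r,1/\phi)\ge c_1 r$ for some constant $c_1>0$ and all large $r$. On the other hand, since $\phi=\alpha_{\nu_s}f^{\nu_s}$ is an honest product of entire functions, the multiplicities add with no cancellation, so the divisor identity $n(r,1/\phi)=\nu_s\,n(r,1/f)+n(r,1/\alpha_{\nu_s})$ is exact.

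The final step is a growth comparison. The coefficient $\alpha_{\nu_s}$ has order $<1$, hence $\lambda(\alpha_{\nu_s})<1$ and $n(r,1/\alpha_{\nu_s})=o(r)$. Substituting this into the divisor identity and using the linear lower bound for $n(r,1/\phi)$ yields $\nu_s\,n(r,1/f)\ge c_1 r-o(r)$. This already forces $\nu_s\ge 1$ (the case $\nu_s=0$ being impossible, as it would demand a non-constant $c$-periodic entire function of order $<1$), and it shows $n(r,1/f)\ge c_2 r$ for large $r$. Taking $\limsup_{r\to\infty}\log n(r,1/f)/\log r$ then gives $\lambda(f)\ge 1$, as required.

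I do not expect a serious obstacle here: the only points that need care are checking that the chosen non-constant coefficient cannot be a constant term ($\nu_s=0$), which the growth comparison settles automatically, and using the order-$<1$ hypothesis twice, once to guarantee that $\alpha_{\nu_s}$ actually has zeros and once to ensure that its own zeros are too sparse ($o(r)$) to account for the linear growth coming from periodicity. All the heavy lifting is done by Theorem~\ref{th2}; the rest is bookkeeping of multiplicities together with the translation of periodicity into a linear zero count.
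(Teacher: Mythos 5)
Your proof is correct, but it follows a genuinely different route from the paper's. The paper argues by contradiction: assuming $\lambda(f)<1$, it writes $f(z)=v(z)e^{g(z)}$ with $\rho(v)<1$ via Hadamard factorization, substitutes into $Q(f)$, and invokes Proposition~\ref{G} (the Gross-type result on periodic exponential sums, which rests on Borel's lemma) to force every product $\alpha_{\nu_s}(z)v(z)^{\nu_s}$ to be constant; since these are zero-free and both factors are entire of order $<1$, all coefficients are constant, contradicting the hypothesis. You instead argue directly: Theorem~\ref{th2} makes each term $\alpha_{\nu_s}(z)f(z)^{\nu_s}$ individually $c$-periodic, the chosen non-constant coefficient must have a zero (zero-free entire of order $<1$ is constant), periodicity propagates that zero along an arithmetic progression giving $n(r,1/\phi)\ge c_1 r$, and since the coefficient's own zeros contribute only $o(r)$, the linear zero count must come from $f$ itself, whence $\lambda(f)\ge 1$. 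Your treatment of the edge case $\nu_s=0$ (a non-constant $c$-periodic entire function of order $<1$ cannot exist) is the right way to rule it out. What each approach buys: yours bypasses Proposition~\ref{G} entirely and is more elementary and quantitative --- it actually produces a linear lower bound $n(r,1/f)\gtrsim r$ and locates an explicit arithmetic progression responsible for it --- while the paper's factorization argument is structurally aligned with how Proposition~\ref{G} is used elsewhere (e.g., in the proofs of the main theorems), so it recycles machinery already in place. Both use the entirety and order hypotheses on the coefficients in an essential way, just at different points.
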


\begin{proof}
	Assume that $ \lambda(f)<1 $. Then we can write $ f(z) =v(z)e^{g(z)} $, where $ \rho(v)<1 $. By substituting this $ f(z) $ in $ Q(f) $ and using Proposition~\ref{G}, we obtain, for any $ s\in \{1, \ldots,l\} $, that $ \alpha_{\nu_{s}}(z) v(z)^{\nu_{s}} $ is constant. Since both $ \alpha_{\nu_s}(z) $ and $ v(z) $ are entire functions of order $ <1 $, it follows that both of them must be constants, and then all the coefficients are constants, which contradicts the assumption. Thus  $ \lambda(f)\ge 1 $.
\end{proof}

Example~\ref{ex-1} shows that the conclusion of Corollary~\ref{Cor2} is sharp. In the case of meromorphic coefficients, the conclusion of Corollary~\ref{Cor2} does not hold. For example, any non-constant entire function $ v(z) $ with $ \rho(v)<1 $, and take $f(z)=v(z) e^z $. Then the polynomial
$$
Q(f)= \frac{1}{v(z)} f(z) + \frac{1}{v(z)^2}f(z)^2+\frac{1}{v(z)^3}f(z)^3
$$
is periodic. Here, all the coefficients are of order $ <1 $, and $ \lambda(f)<1 $.

%
%

\section{Proofs}\label{Sec 4}

\subsection{Proofs of Theorems~\ref{th3} and \ref{th4}} \label{secp}
Before proceeding with the proofs, we give a preparation on the monomial $ M(z,f) $.

Let $ f(z) $ be a transcendental entire function and let $ d\in\mathbb{C} $ such that $ \lambda(f-d)<\rho(f) $. Then, by Weierstrass factorization theorem, we may write $f(z)=\pi(z) e^{h(z)}+d$, where $h(z)$ is an entire function and $\pi(z)$ is the canonical product of zeros of~$f(z)-d$ with~$\rho(\pi)<\rho(f)$.  Notice, for any positive integer $ k $, that
	$$
	\left( \pi(z) e^{h(z)}\right)^{(k)}	=\left( \pi(z)h'(z)^k+\mathcal{Q}_k(\pi,h')\right) e^{h(z)},
	$$
where $\mathcal{Q}_k(\pi,h')$ is a differential polynomial in $\pi(z)$ and $h'(z)$ with constant coefficients. Here, $\mathcal{Q}_k(\pi,h')$ is non-linear with respect to $ h'(z) $ with degree $\deg_{h'}\mathcal{Q}_k\leq k-1$, and linear with respect to $ \pi(z) $. Therefore, substituting $f(z)=\pi (z)e^{h(z)}+d$ into \eqref{mo} yields
	\begin{eqnarray}\label{c12}
	M(z,f)&=&H(z)\left( 1+\frac{d}{\pi(z)}e^{-h(z)}\right) ^{\lambda_{0}}e^{\gamma_{M}h(z)}.
	\end{eqnarray}
where
	\begin{eqnarray}\label{c}
	H(z)=\pi(z)^{\lambda_{0}}\prod_{k=1}^n\mathcal{L}_k(\pi,h')^{\lambda_{k}}
	\end{eqnarray}
and $\mathcal{L}_k(\pi,h')=\pi(z)\, h'(z)^k+\mathcal{Q}_k(\pi,h')$. Expanding the product in \eqref{c} yields
	\begin{eqnarray}\label{c2}
	H(z)=\pi(z)^{\gamma_M}\,  h'(z)^{\Gamma_M}+Q(\pi,h'),
	\end{eqnarray}
where $Q(\pi,h')$ is a differential polynomial in $\pi(z)$ and $h'(z)$ with constant coefficients, and  $\deg_{h'} Q\leq \Gamma_M-1$.

\begin{proof}[Proof of Theorem~\ref{th3}]
Here we have $ d=0 $, and therefore \eqref{c12} becomes
	$$
	M(z,f)=H(z) e^{\gamma_M h(z)},
	$$
where $ H(z) $ satisfies \eqref{c}	and \eqref{c2}.

\medskip
	
(1) Suppose that $\rho_2(f)<1$. Since $ \rho(\pi)< \rho(f)\le \infty$ and $ \rho(h) <1 $, it follows that $ \rho(H)< \infty $. Hence, from the periodicity of  $ M(z,f) =H(z)e^{\gamma_Mh(z)}$ and Proposition~\ref{period-function-1} we deduce that $ h(z) $ is a polynomial with  $\deg(h)=1$. Indeed, if $ \deg(h)\geq 2$, we may use Proposition~\ref{period-function-1} to obtain
	$$
	\rho(f)=\deg (h)\leq \rho(H)\leq \rho(\pi),
	$$
which is a contradiction. Now, from \eqref{c2} we have $$ \rho(H) \le \rho(\pi)< \rho(f) = \deg(h)=1, $$ and then Remark~\ref{period-rem-1} asserts that $ H (z)$ must be a constant, that is, the product \eqref{c} is a constant. Since all factors of (\ref{c}) are entire functions of order $ <1 $, it follows that all these factors must be constants. Notice that, for each $k$, the differential polynomial $\mathcal{L}_k(\pi,h')$ takes the form
	$$
	\mathcal{L}_k(\pi,h'):=\pi^{(k)}(z)+c_{k-1}\pi^{(k-1)}(z)+\ldots+c_1\pi(z), \quad  \text{where }c_j\text{ are constants},
	$$
and $ \rho(\mathcal{L}_k) \le \rho(\pi)<1 $. If now $\lambda_{0}>0$, then $\pi(z)$ is a constant. Otherwise, there exists a positive $\lambda_{k}$ for which $\mathcal{L}_k(\pi,h')$ is constant. Therefore,
	\begin{equation*}\label{c7}
	\pi^{(k+1)}(z)+c_{k-1}\pi^{(k)}(z)+\ldots+c_1\pi'(z)=0.
	\end{equation*}
If $\pi'(z)\not\equiv 0$, then $ \pi'(z) $ is a non-trivial solution of a linear differential equation with constant coefficients, i.e., $ \pi'(z)$  is a linear combination of functions of the form $ e^{\alpha z} $, where $ \alpha $ is a constant. This yields  $\rho(\pi')=\rho(\pi)=1$, which contradicts $ \rho(\pi)<1 $. Thus~$\pi(z)$ must be a non-zero constant.
Hence, $ f(z) $ takes the form $ e^{az+b} $, where $ a $ and $ b $ are non-zero constants. From the periodicity of $ M(z,f) $, we directly obtain $ e^{\gamma_Mac}=1 $.

\medskip

(2) Suppose now that $1\le \rho_2(f)< \infty$ and $ \lambda(f)<\rho_2(f) $. Then $\rho(\pi)<\rho(h)=\rho_2(f)<~\infty$, and hence $ \rho(H)\le\rho(h)<\infty $. Since $M(z,f)$ is periodic of period $c$, it follows that 
	\begin{eqnarray}\label{c1}
	e^{\gamma_M q(z)}=\frac{H(z)}{H(z+c)}, \quad q(z)=h(z+c)-h(z).
	\end{eqnarray}
As in the proof of Proposition~\ref{period-function-1},  $ q(z) $ is polynomial. If $\deg(q)=t\geq 1$, then applying \cite[Theorem~9.2]{CF} to \eqref{c1} yields
	\begin{eqnarray}\label{c8}
	t+1\leq \rho(H)\leq \rho(h).
	\end{eqnarray}
Making use of \eqref{c2} and  the fact that $ h(z+c) = h(z) + q(z)$, we obtain
	\begin{eqnarray}\label{c5}
	H(z+c)
	=\pi_c(z)^{\gamma_M}(h'(z))^{\Gamma_M}+\tilde{Q}(\pi_c,h'),
	\end{eqnarray}
where $\pi_c(z)$ stands for $\pi(z+c)$, $\tilde{Q}(\pi_c,h')$ is a differential polynomial in $\pi_c(z)$ and $h'(z)$ with polynomial coefficients, and of degree $\deg_{h'}\tilde{Q}\leq \Gamma_M-1$. Combining \eqref{c5} and \eqref{c1} results~in
	\begin{eqnarray}\label{c6}
	(\pi_c(z)^{\gamma_M}-e^{-\gamma_M q(z)} \pi(z)^{\gamma_M})(h'(z))^{\Gamma_M}=e^{-\gamma_Mq(z)}Q(\pi,h')-\tilde{Q}(\pi_c,h').
	\end{eqnarray}
If $\pi_c(z)^{\gamma_M}-e^{-\gamma_M q(z)} \pi(z)^{\gamma_M}\not\equiv 0,$ then by
taking the Nevanlinna characteristic function of both sides of \eqref{c6} and keeping in mind that $\rho=\rho(\pi)<\infty$, we obtain
	\begin{equation*}
	T(r,h')= O(r^{\max\{\rho, t\}})+O(\log r).
	\end{equation*}
Hence, $\rho(h)\leq \max\{\rho, t\}$, which contradicts \eqref{c8} and $ \rho(\pi)<\rho(h) $. Therefore,
	$$
	\pi_c(z)^{\gamma_M}=e^{-\gamma_M q(z)} \pi(z)^{\gamma_M},
	$$
which implies that $f_c(z)^{\gamma_M}=f(z)$. Hence, $f(z)$ is periodic of period $c$ or $\gamma_M c$.

Consider next the case $t=0$. This means that $q(z)=q$ is a constant, hence $h'(z+c)=h'(z)$. From \eqref{c2}, and by making use of \eqref{c1}, we get
	$$
	\left( \pi_c(z)^{\gamma_M}-e^{\gamma_M q}\pi(z)^{\gamma_M}\right) (h'(z))^{\Gamma_M}=e^{\gamma_M q}Q(\pi,h') -Q(\pi_c,h').
	$$
The same arguments as for the case $t\geq 1$ lead to the same conclusion that $f(z)$ is periodic of period $c$ or $\gamma_M c$.
\end{proof} 	


\begin{proof}[Proof of Theorem~\ref{th4}]
Since $\lambda_{0}>0$, it follows from \eqref{c12} that
	\begin{eqnarray}\label{c11}
	M(z,f)=H(z)\sum_{i=0}^{\lambda_{0}} \binom{\lambda_{0}}{i}\left( \frac{d}{\pi(z)}\right) ^{i}e^{(\gamma_M-i)h(z)}.
	\end{eqnarray}
Without loss of generality, we may assume that $\gamma_{M}>\lambda_{0}$, since otherwise $M(z,f)=f(z)^{\gamma_{M}}$, and the periodicity of $f(z)$ follows trivially from that of $M(z,f)$.

Recall that $\rho(\pi)<\rho(f)=\rho(e^h)$. Since $e^{h(z)}$ is of regular order, we deduce by the properties of the limit that $T(r,\pi)=S(r,e^h)$ and consequently $T(r,H)=S(r,e^h)$. Thus, $M(z,f)$ can be regarded as a polynomial (with at least two terms) in $e^{h(z)}$ with coefficients being small with respect to $e^{h(z)}$. Making use of Theorem~\ref{th2} and considering the first two terms of \eqref{c11} result in
	$$
	H(z+c)e^{\gamma_{M}h_c(z)}=H(z)e^{\gamma_{M}h(z)}
	$$
and
	$$
	\frac{H(z+c)}{\pi_c(z)} e^{(\gamma_{M}-1)h_c(z)}=	\frac{H(z)}{\pi(z)}e^{(\gamma_{M}-1)h(z)}.
	$$
From the above two equations, we get that $\pi(z)e^{h(z)}$ is periodic of period $c$. Thus $f(z)$ is periodic of period $c$.
\end{proof}

\subsection{Proofs of Theorems~\ref{th6} and \ref{th7}} 
Similarly to subsection~\ref{secp},  we have $ f(z)=\pi(z) e^{h(z)}+d $ and we see that each monomial $ M_j(z,f) $ in \eqref{dif} satisfies 
	\begin{equation*}
	M_j(z,f)=H_j(z)\left( 1+\frac{d}{\pi(z)}e^{-h(z)}\right) ^{\lambda_{0j}}e^{\gamma_{j}h(z)}.
	\end{equation*}
Here, $ H_j(z) $ has the same form as in \eqref{c} and \eqref{c2}, i.e., 
	\begin{eqnarray}\label{cb}
	H_j(z)=\pi(z)^{\lambda_{0j}}\prod_{k=1}^n\mathcal{L}_k(\pi,h')^{\lambda_{kj}},
	\end{eqnarray}
and 
	\begin{eqnarray}\label{c2b}
	H_j(z)=\pi(z)^{\gamma_j}(h'(z))^{\Gamma_j}+Q_j(\pi,h'),
	\end{eqnarray}
where $\mathcal{L}_k(\pi,h')=\pi(z)\, h'(z)^k+\mathcal{Q}_k(\pi,h')$ and $Q_j(\pi,h')$ is a differential polynomial in $\pi(z)$ and $h'(z)$ with constant coefficients, and  $\deg_{h'} Q_j\leq \Gamma_j-1$. Therefore,
\begin{eqnarray}\label{m}
P(z,f)=\sum_{j=1}^m\alpha_jH_j(z)\left( 1+\frac{d}{\pi(z)}e^{-h(z)}\right) ^{\lambda_{0j}}e^{\gamma_{j}h(z)}.
\end{eqnarray}

\begin{proof}[Proof of Theorem~\ref{th6}]
	
(i)  Assume that $d=0$ and $ \lambda_{0j}>0$ for all $ j $. Then \eqref{m} becomes
	\begin{eqnarray}\label{eqq1}
	P(z,f)&=&\sum_{j=1}^{m}\alpha_{j}H_j(z) e^{\gamma_{j}h(z)}.
	\end{eqnarray}
By using the sequence defined in \eqref{seq}, $P(z,f)$ can be rewritten as:
	\begin{equation}\label{dd7}
	P(z,f)=\sum_{i=1}^{l}\tilde{H}_{i}(z)e^{\delta_{i}h(z)}, \quad 1\leq l\leq m,
	\end{equation}
where
	\begin{equation}\label{dd8}
	\tilde{H}_i(z)=\sum_{\substack{\gamma_j=\delta_i\\ 1\leq j\leq m}}\alpha_jH_j(z),\quad \text{for all}\; i\in \{1,2,\ldots,l\}.
	\end{equation}
Here, we have $ \rho(\tilde{H}_i) \le \max\{\rho(\pi);\rho(h) \}< \rho(e^h)$, and then
	$$
	T(r,\tilde{H}_i)=S(r,e^h), \quad \text{for all}\; i\in \{1,2,\ldots,l\}.
	$$
Thus, $ P(z,f) $ can be seen as a polynomial in $ e^{h(z)} $ with coefficients being small with respect to $ e^{h(z)} $. For the rest of the proof  we use the following lemma.
\begin{lemma}\label{lem}
If $ \lambda_{0j}>0$ for all $ j $, and there exists $ p\in \{1,\ldots,l\} $ such that $ \tilde{H}_p(z) e^{\delta_p h(z)} $ is non-zero periodic, then $ f(z) $ is periodic of period $ c $ or $ \delta_{p}c $.
\end{lemma}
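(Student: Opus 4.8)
The plan is to view the single block $\tilde{H}_p(z)\,e^{\delta_p h(z)}$ as a product of the shape $v(z)e^{g(z)}$ with $v=\tilde{H}_p$ and $g=\delta_p h$, and then to run an argument parallel to the proof of Theorem~\ref{th3}(1), with $\delta_p$ playing the role that $\gamma_M$ played there. Throughout I keep the representation $f=\pi e^{h}+d$ with $\rho(\pi)<\rho(f)$ (which is what $\lambda(f-d)<\rho(f)$ gives) and, from $\rho_2(f)<1$, the bound $\rho(h)<1$.

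First I would pin down the shape of $h$. Since $\tilde{H}_p$ is, by \eqref{cb}, a differential polynomial in $\pi$ and $h'$ with constant coefficients, its order is finite, so Proposition~\ref{period-function-1} applies to the periodic function $\tilde{H}_p e^{\delta_p h}$. The option $\rho(g)=\rho(h)\ge 1$ is excluded because $\rho(h)\le\rho_2(f)<1$, hence $h$ must be a polynomial. If $\deg h\ge 2$, then $\lambda(f-d)<\rho(f)$ forces $\rho(\pi)<\rho(e^{h})=\deg h$, while the $\deg g\ge 2$ clause of Proposition~\ref{period-function-1} gives $\deg h\le\rho(\tilde{H}_p)\le\rho(\pi)$, a contradiction; and $h$ cannot be constant, since otherwise $\rho(f)=\rho(\pi)=\lambda(f-d)$ would contradict $\lambda(f-d)<\rho(f)$. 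Thus $h(z)=az+b$ with $a\neq 0$, so that $\rho(f)=1$ and $\rho(\pi)<1$.

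With $h$ linear I would next show that $\pi$ is constant. Now $\rho(\tilde{H}_p)\le\rho(\pi)<1$, so Remark~\ref{period-rem-1} (applied to $\tilde{H}_p e^{\delta_p h}$) forces $\tilde{H}_p$ to be a nonzero constant, the nonvanishing being exactly the hypothesis that the block is non-zero. Because $\lambda_{0j}>0$ for every $j$, each $H_j$ in \eqref{cb} carries a factor $\pi^{\lambda_{0j}}$, so $\pi\mid\tilde{H}_p$; as $\tilde{H}_p$ is a nonzero constant this means $\pi$ has no zeros, and a nonvanishing entire function of order $<1$ is constant. Hence $\pi\equiv C_0$ and $f(z)=C_0 e^{az+b}+d$.

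Finally, comparing $\tilde{H}_p(z+c)e^{\delta_p h(z+c)}=\tilde{H}_p(z)e^{\delta_p h(z)}$ with $\tilde{H}_p$ constant yields $e^{\delta_p a c}=1$, so $e^{ac}$ is a $\delta_p$-th root of unity; then $f(z+\delta_p c)=C_0 e^{\delta_p a c}e^{az+b}+d=f(z)$, while $f(z+c)=f(z)$ precisely when $e^{ac}=1$. This gives exactly that $f$ is periodic of period $c$ or $\delta_p c$, and one sees the argument is insensitive to the value of $d$. I expect the only delicate point to be the order bookkeeping in the first step, where $\rho(f)$ may a priori be infinite, so the degree of $h$ must be controlled through the inequality $\rho(\pi)<\rho(e^{h})$ rather than through $\rho(f)$ directly.
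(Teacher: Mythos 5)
Your proof is correct and takes essentially the same route as the paper's: Proposition~\ref{period-function-1} and Remark~\ref{period-rem-1} force $h$ to be linear and $\tilde{H}_p$ to be a nonzero constant, and then the factor $\pi^{\lambda_{0j}}$ present in every $H_j$ shows $\pi$ is zero-free, hence constant, giving $f(z)=C_0e^{az+b}+d$ and the period $c$ or $\delta_p c$. The only cosmetic difference is that you phrase the last step as the divisibility $\pi\mid\tilde{H}_p$, where the paper evaluates \eqref{eqq8} at a hypothetical zero of $\pi$; these are the same observation.
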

\begin{proof}
 Proposition~\ref{period-function-1} reveals that $ h(z) $ is a polynomial of degree $ \deg(h)=1 $, and $ \rho(\tilde{H}_p)\le \rho(\pi)<\rho(f)=\deg(h)=1 $. Then, Remark~\ref{period-rem-1} reveals that $ \tilde{H}_p(z) $ is a constant. 
From \eqref{dd8} and \eqref{cb}, we have
	\begin{eqnarray}\label{eqq8}
	\sum_{\substack{\gamma_j=\delta_p\\ 1\leq j\leq m}}\alpha_{j}\pi(z)^{\lambda_{0j}}\prod_{k=1}^n\mathcal{L}_k(\pi,h')^{\lambda_{kj}}=\tilde{H}_p.
	\end{eqnarray}
By assumption, we have $\lambda_{0j}>0$ for all $j\in \{1,2,\ldots,m\}$. If $\pi(z)$ is non-constant, then $\pi(z)$ has at least one zero $z_0$. Plugging $z_0$ in \eqref{eqq8} yields $\tilde{H}_p=0$, which is a contradiction. Thus, $ \pi(z) $ doesn't have zeros, and then $\pi(z)$ must be a non-zero constant since $\rho(\pi)<1$. From the periodicity of $ \tilde{H}_p e^{\delta_p h(z)} $, we conclude that $ f(z) $ is periodic of period $ c $ or $ \delta_{p}c $.
\end{proof}
Now, return to the proof of Theorem~\ref{th6}. 
Since $ P(z,f) \not\equiv0$, it follows that there is a $ p\in \{1,\ldots,l\} $ such that $ \tilde{H}_p(z)\not\equiv 0 $.   Now, if $ P(z,f) $ has only one term, i.e.,  
	\begin{equation}\label{1t}
	P(z,f)=\tilde{H}_p(z)e^{\delta_ph(z)},
	\end{equation}
then, by periodicity of $ P(z,f) $ and Lemma~\ref{lem} we deduce that $ f(z) $ is periodic of period $ c $ or $ \delta_{p}c $. If $ P(z,f) $ has at least two non-zero terms, then we may apply Theorem~\ref{th2} to obtain that the term $ \tilde{H}_p(z)e^{\delta_ph(z)} $ is periodic. Again, Lemma~\ref{lem} results in the conclusion that $ f(z) $ is periodic of period $ c $ or $ \delta_{p}c $.

\medskip

(ii)  Assume now that $d\neq 0$ and all $\lambda_{0j}$ are equal to the same positive integer $\lambda$. Then, we may write \eqref{m} as
	\begin{eqnarray}\label{eqq12}
	P(z,f)&=&\left( 1+\frac{d}{\pi(z)}e^{-h(z)}\right) ^{\lambda}\sum_{i=1}^{l}\tilde{H}_i(z)e^{\delta_{i}h(z)},
	\end{eqnarray}
where $\tilde{H}_i(z)$ are given in \eqref{dd8} and the $\delta_i$'s are defined in \eqref{seq}. Clearly, the coefficients $\tilde{H}_i(z)$ are not all vanishing identically, since otherwise $P(z,f)\equiv 0$, which is a contradiction. If the sum in \eqref{eqq12} reduces to a constant, then  Theorem~\ref{th2} reveals directly that $ f(z) $ is $ c $-periodic.  Otherwise, expanding \eqref{eqq12} yields
	\begin{eqnarray}\label{n}
\!\!\!\!\!\!	P(z,f)=\sum_{i=1}^{l}\tilde{H}_i(z)e^{\delta_{i}h(z)}+d\lambda\sum_{i=1}^{l}\frac{\tilde{H}_i(z)}{\pi(z)}e^{(\delta_{i}-1)h(z)}+\ldots+d^{\lambda}\sum_{i=1}^{l}\frac{\tilde{H}_i(z)}{\pi(z)^{\lambda}}e^{(\delta_{i}-\lambda)h(z)}.
	\end{eqnarray}
Let $ p\in\{1,\ldots,m\} $ be the largest index for which $\tilde{H}_p(z)\not\equiv 0$. Then the leading term of $P(z,f)$  would be $\tilde{H}_p(z)e^{\delta_ph(z)}$. It follows by applying Theorem~\ref{th2} that $\tilde{H}_p(z)e^{\delta_ph(z)}$ is periodic of period $c$, and then Lemma~\ref{lem} gives that $f(z)$ is periodic of period $c$~or~$\delta_{p}c$.

\medskip

(iii) Assume that $ d\neq 0 $, $ \lambda_{0j}>0$ for all $ j $ and $ \gamma_p \in \Lambda_{P} $.  Recall that $\delta_l=\gamma_P$. It follows from the following lemma that $ \tilde{H}_l(z) $ defined in \eqref{dd8} satisfies $\tilde{H}_l(z)\not\equiv 0$.

\begin{lemma}\label{lem2}
If there exists  $ s $ such that $\delta_s\in \Lambda_P$, then $\tilde{H}_l(z)\not\equiv 0$.
 \end{lemma}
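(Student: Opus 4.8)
The plan is to read $\tilde{H}_l(z)$ as a polynomial in $h'(z)$ and to isolate its leading coefficient. Substituting \eqref{c2b} into \eqref{dd8}, and using that every index contributing to $\tilde{H}_l$ shares the common degree $\gamma_j=\delta_l=\gamma_P$, one obtains
\[
\tilde{H}_l(z)=\pi(z)^{\delta_l}\sum_{\gamma_j=\delta_l}\alpha_j\bigl(h'(z)\bigr)^{\Gamma_j}+\sum_{\gamma_j=\delta_l}\alpha_j\,Q_j(\pi,h'),
\]
which I would regroup by the powers of $h'(z)$, the resulting coefficients being differential polynomials in $\pi(z)$ with constant coefficients. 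Since $\deg_{h'}Q_j\le\Gamma_j-1$, no remainder $Q_j$ reaches the power $(h')^{\Gamma_l^{*}}$, where $\Gamma_l^{*}:=\max\{\Gamma_j:\gamma_j=\delta_l\}$ is the largest weight occurring among the top-degree monomials. Hence the coefficient of $(h')^{\Gamma_l^{*}}$ in $\tilde{H}_l$ is exactly $\pi^{\delta_l}\sum_{k\in\Lambda(\delta_l)}\alpha_k$, since $\Lambda(\delta_l)$ collects precisely the indices of degree $\delta_l$ and weight $\Gamma_l^{*}$, and nothing else interferes.

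The key case, and the one used in Theorem~\ref{th6}(iii), is $\gamma_P=\delta_l\in\Lambda_P$, which by the definition of $\Lambda_P$ means $\sum_{k\in\Lambda(\delta_l)}\alpha_k\neq0$; the leading coefficient above is then a non-zero constant times $\pi^{\delta_l}$, hence $\not\equiv0$ because $\pi\not\equiv0$. It remains to promote this to $\tilde{H}_l\not\equiv0$, i.e.\ to rule out that the top power $(h')^{\Gamma_l^{*}}$ is cancelled by the lower ones. I expect this non-cancellation to be the main obstacle: a plain Nevanlinna comparison is inconclusive, since $h'$ lies in the field of functions of order $<\rho(f)$ (indeed $\rho(h')=\rho_2(f)<1$ when $h$ is transcendental, and $\rho(h')=0$ when $h$ is a polynomial), so the powers of $h'$ cannot be separated by growth alone.

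My proposed resolution is to argue by contradiction and to exploit the standing hypothesis $\rho(\pi)=\lambda(f-d)<\rho(f)$. Assuming $\tilde{H}_l\equiv0$ gives a non-trivial homogeneous differential relation of degree $\delta_l$ for $\pi$, whose coefficients are built from $h'$ and its derivatives; reducing it — as in the model case $M_j\in\{ff'',f^2\}$, where after dividing by $\pi$ it collapses to a linear equation for $\pi$ with polynomial coefficients — forces every entire solution $\pi$ to have order $\deg h=\rho(f)$, contradicting $\rho(\pi)<\rho(f)$. The one genuinely degenerate situation is $\pi\equiv\text{const}$, where all the $\pi$-derivative terms drop out and the relation reduces to $\sum_{\gamma_j=\delta_l}\alpha_j(h')^{\Gamma_j}\equiv0$; if moreover $h$ is linear then $h'$ is a non-zero constant and this sum can vanish. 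But $\pi\equiv\text{const}$ means precisely that $d$ is a Picard value of $f$, a case already settled by Corollary~\ref{Th4.2}, so it may be set aside here. Once $\tilde{H}_l\not\equiv0$ is secured, the term $\tilde{H}_le^{\delta_lh}$ is the genuine leading term of $P(z,f)$ as a polynomial in $e^{h}$, which is what Theorem~\ref{th2} and Lemma~\ref{lem} act on in the remainder of the proof.
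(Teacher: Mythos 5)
Your algebraic reduction is exactly the paper's: substituting \eqref{c2b} into \eqref{dd8} and using that all contributing indices share the degree $\delta_s$ gives $\tilde{H}_s(z)=\beta_s\pi(z)^{\delta_s}(h'(z))^{\tilde{\Gamma}_s}+\tilde{Q}_s(\pi,h')$ with $\beta_s=\sum_{j\in\Lambda(\delta_s)}\alpha_j\neq 0$ and $\deg_{h'}\tilde{Q}_s\le\tilde{\Gamma}_s-1$, which is precisely the paper's \eqref{dd9}. The divergence, and the gap, is in how the cancellation of the top term is excluded. The paper does it in one line: if $\tilde{H}_s\equiv 0$, the identity $\beta_s\pi^{\delta_s}(h')^{\tilde{\Gamma}_s}=-\tilde{Q}_s(\pi,h')$ yields $T(r,h')=O(T(r,\pi))$, hence $\rho(h)\le\rho(\pi)$, contradicting $\rho(\pi)=\lambda(f-d)<\rho(h)$. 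You explicitly discard this growth comparison as ``inconclusive.'' That verdict is defensible only in the hyper-order $<1$ setting; in the setting of Theorem~\ref{th7}, where the lemma is equally needed, one has $\rho(\pi)<\rho_2(f)=\rho(h)$ as a standing hypothesis and the comparison is decisive --- it \emph{is} the paper's entire proof, and your write-up never recovers it.

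What you substitute is not a proof. The claim that $\tilde{H}_l\equiv 0$ ``forces every entire solution $\pi$ to have order $\deg h=\rho(f)$'' is supported only by analogy with a two-term model case; for a general homogeneous relation of degree $\delta_l$ in $\pi$ whose coefficients involve $h'$ and its derivatives, no reduction to a linear equation with polynomial coefficients is exhibited, and none exists in general. Moreover, the degenerate configuration you yourself flag ($\pi$ constant, $h$ linear) makes $\tilde{H}_l=C^{\delta_l}\sum_{\gamma_j=\delta_l}\alpha_j a^{\Gamma_j}$, which can vanish for suitable $a$ even when $\beta_l\neq 0$; so the very statement you are proving can fail there, and deferring to Corollary~\ref{Th4.2} at most rescues the periodicity of $f$ in that case, not the lemma, which is an assertion about $\tilde{H}_l$. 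Your unease is not baseless --- the contradiction ``$\rho(h)\le\rho(\pi)$ versus $\rho(\pi)<\rho(h)$'' invoked in the paper is not supplied by the hypotheses of Theorem~\ref{th6}, where $h$ may be a polynomial of degree one --- but diagnosing a weak point in the source is not the same as providing a valid alternative argument, and as it stands your central non-cancellation step is asserted rather than proved.
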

\begin{proof}
By making use of \eqref{c2b}, we obtain
\begin{eqnarray}\label{dd9}
\tilde{H}_s(z)&=&\sum_{\substack{\gamma_j=\delta_s\\ 1\leq j\leq m}}\alpha_jH_j(z)\nonumber\\
&=&\sum_{\substack{\gamma_j=\delta_s\\ 1\leq j\leq m}} \Big( \alpha_j   \pi(z)^{\delta_s}(h'(z))^{\Gamma_j}+Q_j(\pi,h') \Big) \nonumber\\
&=& \beta_s\pi(z)^{\delta_s}(h'(z))^{\tilde{\Gamma}_s}+\tilde{Q}_s(\pi,h'),
\end{eqnarray}
where $$\beta_s=\sum\limits_{j\in\Lambda(\delta_s)}\alpha_j\neq 0,$$ and $\tilde{\Gamma}_s$ is the highest weight of the terms with degree $\delta_{s}$ in \eqref{dif}, $\tilde{Q}_s(\pi,h')$ is a differential polynomial in $h'(z)$ and $ \pi(z) $ with $\deg_{h'}\leq \tilde{\Gamma}_s-1$, with constant coefficients. 
If $\tilde{H}_s(z)\equiv 0$, then from \eqref{dd9} we get that $ T(r,h') = O(T(r,\pi)) $, and this yields $ \rho(h)\le \rho(\pi) $, which contradicts the assumption $ \rho(\pi)<\rho(h) $. Thus $\tilde{H}_s(z)\not\equiv 0$. This completes the proof the lemma.
\end{proof}

From \eqref{m} we obtain that
	\begin{eqnarray}
	P(z,f)&=&\sum_{j=1}^m\alpha_{j}H_j(z)\left(e^{\gamma_{j}h(z)}+\ldots+\left( \frac{d}{\pi(z)}\right) ^{\lambda_{0j}}e^{(\gamma_{j}-\lambda_{0j})h(z)} \right) \nonumber\\
	&=&\tilde{H}_l(z)e^{\delta_lh(z)}+\mathcal{D}(z,e^{h(z)}), \label{eee}
	\end{eqnarray}
where $\mathcal{D}(z,e^{h(z)})$ is a polynomial in $e^{h(z)}$ of degree $\leq \delta_l-1$, with coefficients being small with respect to $e^{h(z)}$. Therefore, by using Theorem~\ref{th2}, we obtain that $ \tilde{H}_l(z)e^{\delta_lh(z)} $ is periodic of period $ c $, and then Lemma~\ref{lem}  asserts that $f(z)$ is periodic of period $c$~or~$\delta_{p}c$.
\end{proof}

%
%
%

\begin{proof}[Proof of Theorem~\ref{th7}]
(i) Assume that $d=0$. In this case, $ P(z,f) $ takes the form~\eqref{dd7}. Since $\Lambda_P\neq \emptyset$, there exists $\delta_s\in \Lambda_P$, and hence $ \tilde{H}_s(z)\not\equiv 0 $.  Keeping in mind that $T(r,\tilde{H}_i)=S(r,e^h)$, for all $i=1,\ldots,l$, we may apply Theorem~\ref{th2} to \eqref{dd7} to obtain that $ \tilde{H}_s(z)e^{\delta_s h(z)} $ is periodic of period $ c $. Following the same reasoning as in the proof of Theorem~\ref{th3}(2),  we conclude that $f(z)$ is either periodic of period $c$~or~$\delta_s c$.
		
\medskip

(ii)  Assume now that $d\neq 0$.  If $\lambda_{0j}=0$ for all $j=1,\ldots,m$, then $P(z,f)$ takes the form \eqref{dd7}, and as in the previous case (i), we conclude the periodicity of $f(z)$. Otherwise, if  $ \gamma_{P}\in \Lambda_{P} $, then by recalling that $\delta_l=\gamma_P \in \Lambda_P$, we get $\tilde{H}_l(z)\not\equiv 0$ by Lemma~\ref{lem2}. Applying Theorem~\ref{th2} into \eqref{eee}, we obtain that $ \tilde{H}_l(z)e^{\delta_lh(z)} $ is periodic of period $ c $. Again, as in Case~(i) we obtain that $ f(z) $ is periodic of period $ c $ or $ \gamma_P c $.
\end{proof}


\end{document}